\documentclass[smallheadings]{scrartcl}

\pdfoutput=1

\usepackage[T1]{fontenc}
\usepackage[latin1]{inputenc}

\usepackage{amssymb}
\usepackage{amsfonts}
\usepackage{amsmath}
\usepackage{wasysym}

\usepackage{epsfig}
\usepackage{epic}
\usepackage{eepic}
\usepackage{graphics}
\usepackage{graphicx}
\usepackage{color}
\usepackage{longtable}

\usepackage{amsthm}

\pagestyle{headings}
\parskip=4mm
\parindent=0pt

\newcommand{\ds}{\displaystyle}
\newcommand{\ts}{\textstyle}

\newcommand{\var}{\operatorname{var}}

\newcommand{\Nn}{{\mathbb N}}

\newcommand{\Rr}{{\mathbb R}}
\newcommand{\Cc}{{\mathbb C}}

\newcommand{\eps}{\varepsilon}

\newcommand{\Jj}{\mathbf{J}}
\newcommand{\ccc}{\mathbf{c}}

\newcommand{\Loc}{\mathcal{L}}

\newcommand{\Rcal}{\mathcal{R}}

\newcommand{\Sd}{{ \mathbb S}}

\newcommand{\pia}{\Pi_n}
\newcommand{\pib}{\Pi_{n}^m}
\newcommand{\pic}{\Pi_n^{\mathcal{R}}}
\newcommand{\usa}{{\mathbb S}_n}
\newcommand{\usb}{{\mathbb S}_{n}^m}
\newcommand{\usc}{{\mathbb S}_n^{\mathcal{R}}}
\newcommand{\Ppa}{\mathcal{P}_n}
\newcommand{\Ppb}{\mathcal{P}_n^m}
\newcommand{\Ppc}{\mathcal{P}_n^{\mathcal{R}}}

\newcommand{\Hha}{\mathcal{H}_n}
\newcommand{\Hhb}{\mathcal{H}_n^m}

\def\be{\begin{equation}}
\def\ee{\end{equation}}
\def\bean{\begin{eqnarray*}}
\def\eean{\end{eqnarray*}}

\theoremstyle{plain}
\newtheorem{Thm}{Theorem}[section]

\newtheorem{Cor}[Thm]{Corollary}
\newtheorem{Lem}[Thm]{Lemma}

\theoremstyle{remark}
\newtheorem{Rem}[Thm]{Remark}
\newtheorem{Exa}[Thm]{Example}

\theoremstyle{definition}
\newtheorem{Def}[Thm]{Definition}

\begin{document}

\title{Optimally space localized polynomials \\ with applications in signal processing}
\author{Wolfgang Erb
\thanks{Institute of Mathematics, University of Lübeck,
Wallstrasse 40, 23560 Lübeck, Germany. erb@math.uni-luebeck.de}}

\date{5.08.2010}

\maketitle
\begin{abstract}
For the filtering of peaks in periodic signals, we specify polynomial filters that are optimally localized in space. The space localization of functions $f$ having an expansion in terms of orthogonal polynomials is thereby measured by a generalized mean value $\eps(f)$. Solving an optimization problem including the functional $\eps(f)$, we determine those polynomials out of a polynomial space that are optimally localized. We give explicit formulas for these optimally space localized polynomials and determine in the case of the Jacobi polynomials the relation of the functional $\eps(f)$ to the position variance of a well-known uncertainty principle. Further, we will consider the Hermite polynomials as an example on how to get optimally space localized polynomials in a non-compact setting. Finally, we investigate how the obtained optimal polynomials can be applied as filters in signal processing.
\end{abstract}
{\bf AMS Subject Classification}(2000): 42C05, 92C55, 94A12, 94A17\\[0.5cm]
{\bf Keywords: orthogonal polynomials, space localization, filtering of peaks in signals, uncertainty principles}

\section{Introduction}

For the detection of peaks in mass spectrometry data, window functions are often used to preprocess the incoming signals,
for instance, to perform a baseline correction or to filter out disturbing higher frequencies (cf. \cite{Nguyen2006}, \cite{Yang2009}).
In the following, we consider $2\pi$-periodic signals $f$ filtered by a convolution with a window function $h$, i.e.
the filtered signal $F_h f$ is given by
\begin{equation}
\label{equation-filteroperator} F_h f(t) := (f \ast h)(t) = \frac{1}{2\pi}\int_{-\pi}^{\pi} f(s) h(t-s) ds.
\end{equation}
For peak detection purposes, two properties of the window function $h$ are important. First of all,
the function $h$ should be localized in frequency in order to filter out the high frequencies. On the other hand, the window $h$ should also be well
localized in space such that the convolution operator $F_h$ is still able to determine the peaks of the signal $f$. However, the uncertainty principle states that it is impossible that the function $h$ is well-localized both in space and frequency
(see, for instance, \cite{Erb2010}, \cite{FollandSitaram1997}, \cite{Gröchenig2003}). Therefore, in search for an optimal filter $h$ for 
peak detection, one has always a trade off between denoising the signal $f$ and determining the position of the peaks of $f$.

The main objective of this article is to investigate the space localization of polynomial filters. If the window function $h$ is a trigonometric polynomial of degree $n$, the frequency domain of $h$ is bounded. Therefore, a polynomial filter $h$ in a peak detection process automatically performs a low-pass filtering of the signal $f$. It remains to analyze the space localization of $h$. In this regard, one has to specify how space localization of a function $f$ is measured.

Beside the trigonometric setting, we will consider general systems of orthogonal polynomials in this article. For functions $f$ having an expansion in orthogonal polynomials, we will use a functional $\eps(f)$ to measure the space localization of $f$. We will study an optimization problem including this functional $\eps(f)$ that allows
us to construct polynomials and band-limited functions that are optimally localized in the space domain.
In particular, for Jacobi polynomials, it will turn out that the functional $\eps(f)$ is related to the position variance $\var_S(f)$ of a well-known uncertainty principle and that the optimally space localized polynomials
minimize also the term for the position variance $\var_S(f)$.

By now, research in this direction has been done mainly for trigonometric polynomials on the unit circle and
spherical harmonics on the unit sphere $\Sd^d$. In \cite{Rauhut2005}, Rauhut used the position variance of the Breitenberger uncertainty
principle to construct optimally localized polynomials on the unit circle. On the unit sphere, the works of
Mhaskar et al. \cite{MhaskarNarkovichPrestinWard2000} and
La{\'i}n Fern{\'a}ndez \cite{Fernandez2007} led to optimally space localized polynomials and polynomial wavelets on the unit sphere $\Sd^d$.
One aim of this article is to extend these results to orthogonal expansions on the interval $[-1,1]$. These more general results are then used to construct new window functions for peak filtering purposes in the trigonometric setting.

In the following, we will mainly consider the Hilbert space $L^2([-1,1],w)$ with the inner product
\[ \langle f,g \rangle_{w} = \int_{-1}^1 f(x) \overline{g(x)} w(x) dx,\]
where the weight function $w$ is a nonnegative continuous function on $[-1,1]$.
Then, for $f \in L^2([-1,1],w)$, we define the functional $\eps(f)$ by
\begin{equation} \label{equation-meanvalue}
\eps(f) := \int_{-1}^{1} x |f(x)|^2 w(x) dx.
\end{equation}

Exactly this generalized mean value $\eps(f)$ is the starting point for our investigations.
In the following first section, we will see that the mean value $\eps(f)$ determines a measure for the localization of the function $f$ on
the boundary values of the interval $[-1,1]$. Then, we will study the functional $\eps(f)$ for
polynomial subspaces of $L^2([-1,1],w)$. In particular, in Theorem \ref{Theorem-optimalpolynomial} and in Corollary
\ref{Corollary-explicitformoptimalpolynomials}, we will give representations of
those polynomials $\Ppa$ that maximize $\eps(f)$, i.e. those polynomials that are optimally localized at the right hand boundary of the interval $[-1,1]$ with
respect to $\eps(f)$.

In the third section, we will emphasize on the Jacobi polynomials. In Theorem \ref{Theorem-minequivalenttomax}, it will be shown that the position variance $\var_S(f)$ of an uncertainty principle for Jacobi expansions is minimized by the polynomials $\Ppa$.

In the fourth section, we will consider the Hermite polynomials on the real line and determine optimally space localized polynomials
in this setting. The obtained results are an example of how it is possible to generalize
the theory of Section \ref{Section-optimallylocalizedpolynomials} and \ref{Section-explicitexpressions} to a non-compact setting.

Finally, in the last section we will turn back to the peak filtering application mentioned at the beginning. We will investigate how the optimally space localized polynomials of this article are related to well-known polynomial window functions and how they can be applied as filters for the peak detection of a signal.

\section{Optimally space localized polynomials} \label{Section-optimallylocalizedpolynomials}

We start out by introducing particular polynomial subspaces of $L^2([-1,1],w)$.
Therefore, we denote by $\{p_l\}_{l=0}^\infty$ the family of polynomials that are orthonormal on $[-1,1]$ with respect to the inner product
$\langle \cdot, \cdot \rangle_w$. Further, we assume that the polynomials $p_l$ of degree $l$ are normalized such that the coefficient of $x^l$ is positive. Then,
the family $\{p_l\}_{l=0}^\infty$ defines a complete orthonormal set in the Hilbert space $L^2([-1,1],w)$ (cf. \cite[Section 2.2]{Szegö}).

\begin{Def} \label{definition-polynomialspacesJacobi}
As subspaces of the Hilbert space $L^2([-1,1],w)$, we consider the following three polynomial spaces:
\begin{enumerate}
\item[(1)] The space spanned by the polynomials $p_l$, $0 \leq l \leq n$:
\begin{equation} \pia :=  \left\{P:\;P(x)= \sum_{l=0}^n c_l p_l(x),\;c_0, \ldots, c_n \in \Cc \right\}.\end{equation}
\item[(2)] The space spanned by the polynomials $p_l$, $m \leq l \leq n$:
\begin{equation} \pib := \left\{P:\;P(x)= \sum_{l=m}^n c_l p_l(x),\;c_m, \ldots, c_n \in \Cc\right\}.\end{equation}
\item[(3)] The space spanned by a polynomial $\ds \Rcal(x) = p_m(x)+\sum_{l=0}^{m-1} e_l p_l(x)$
 of degree $m$ and the polynomials $p_l$, $m+1 \leq l \leq n$:
\begin{equation} \pic :=  \left\{P:\;P(x) = c_m \Rcal(x) + \sum_{l=m+1}^n c_l p_l(x),\;c_m, \ldots, c_n \in \Cc\right\}.\end{equation}
\end{enumerate}
Further, we define the unit spheres of the spaces $\pia$, $\pib$ and $\pic$ as
\begin{align*}
\usa &:= \left\{P \in \pia: \; \|P\|_{w} = 1\right\}, \\ \usb &:= \left\{P \in \pib: \; \|P\|_{w} = 1\right\}, \\
\usc &:= \left\{P \in \pic: \; \|P\|_{w} = 1\right\}.\end{align*}
\end{Def}

\begin{Rem}
Clearly, $\pib \subset \pia$ and $\pic \subset \pia$. In the literature,
the spaces $\pib$ are sometimes called wavelet spaces and considered in a more general theory
on polynomial wavelets and polynomial frames, see \cite{MhaskarPrestin2005} and the references therein.
For special choices of $\Rcal$, the polynomials in the spaces $\pic$ play an important role in the theory of polynomial approximation. In particular, if polynomial reproduction is requested, a common choice for the polynomial $\Rcal$ is the Christoffel-Darboux kernel of degree $m$ (see \cite{FilbirMhaskarPrestin2009}, \cite{Mhaskar}). The standardization $e_m = 1$ for the highest expansion coefficient of the polynomial $\Rcal$ causes no loss of generality and is a useful convention for the upcoming calculations.
\end{Rem}

The first goal of this section is to study the localization of the polynomials in the spaces $\pia$, $\pib$ and $\pic$ at the right hand boundary of the interval $[-1,1]$ and to determine those polynomials that are in some sense best localized.
As an analyzing tool for the localization of a function $f \in L^2([-1,1])$ at the point $x = 1$, we consider the mean value $\eps(f)$
as defined in \eqref{equation-meanvalue}.
If $\|f\|_{w} = 1$, then $-1 < \eps(f) < 1$, and the more the mass of the $L^2$-density
$f$ is concentrated at the boundary point $x=1$, the closer the value $\eps(f)$
gets to $1$. Therefore, the value
$\eps(f)$ can be interpreted as a measure on how well the function $f$ is localized at the right hand boundary of the interval $[-1,1]$.
We say that $f$ is localized at $x=1$ if the value $\eps(f)$ approaches $1$.

Now, our aim is to find those elements of the polynomial spaces $\pia$, $\pib$ and $\pic$ that
are optimally localized at the boundary point $x=1$. In particular, we want to solve the following optimization problems:
\begin{align}
 \Ppa &= \arg\max_{P \in \usa} \eps(P),  \label{optimalpolynomiala}\\
 \Ppb &= \arg\max_{P \in \usb} \eps(P),  \label{optimalpolynomialb}\\
 \Ppc &= \arg\max_{P \in \usc} \eps(P).  \label{optimalpolynomialc}
\end{align}

\begin{Rem}
Since the linear spaces $\pia$, $\pib$ and $\pic$ are finite-dimensional, the unit spheres $\usa$, $\usb$ and $\usc$
are compact subsets
and the functional $\eps$ is bounded and continuous on the respective polynomial space. Hence,
it is guaranteed that solutions of the optimization problems \eqref{optimalpolynomiala}, \eqref{optimalpolynomialb} and \eqref{optimalpolynomialc} exist.
\end{Rem}

\begin{Rem}
The optimization problem \eqref{optimalpolynomiala} has a well-known solution which can be found in \cite[Theorem 1.3.3]{Mhaskar}.
The solutions for \eqref{optimalpolynomialb} and \eqref{optimalpolynomialc} given below can be considered as novel.
\end{Rem}

\begin{Rem}
The functional $\eps(f)$ is by far not the only possible way to measure the space localization of a function. In the literature, there exist various other forms in this direction. In the Landau-Pollak-Slepian theory (cf. \cite{FollandSitaram1997}, \cite{Landau1985},
\cite{Slepian1983}) an optimization problem similar as in \eqref{optimalpolynomiala} is investigated.
Results in terms of polynomial and exponential growth of polynomials can be found in the articles \cite{FilbirMhaskarPrestin2009} and \cite{IvanovPetrushevXu2010}. Results concerning the 
Shannon information entropy of orthogonal polynomials are summarized in the survey article \cite{AptekarevDehesaMartinezFinkelshtein2010}.
\end{Rem}

In order to describe the optimal polynomials, we need the notion of associated and of scaled co-recursive associated polynomials.
First of all, we know that the orthonormal polynomials $p_l$ satisfy the following three-term recurrence relation
(cf. \cite[Section 1.3.2]{Gautschi})
\begin{align} \label{equation-recursionorthonormal}
b_{l+1} p_{l+1}(x) &= (x - a_l) p_l(x) - b_l p_l(x), \quad l=0,1,2,3, \ldots \\
 p_{-1}(x) &= 0, \qquad p_0(x) = \frac{1}{b_0}, \notag
\end{align}
with coefficients $a_l \in \Rr$ and $b_l > 0$.

\begin{Def} \label{definition-associatedJacobi}
For $m \in \Nn$, the associated polynomials $p_l(x,m)$ on the interval $[-1,1]$ are defined by the shifted recurrence relation
\begin{align} \label{equation-recursionassociatedsymmetric}
b_{m+l+1}\, p_{l+1}(x,m) &= (x - a_{m+l})\, p_l(x,m) - b_{m+l}\, p_{l-1}(x,m),
\quad l=0,1,2, \ldots , \\
 p_{-1}(x,m) &= 0, \qquad p_0(x,m) = 1. \notag
\end{align}
Further, for $\gamma \in \Rr$ and $\delta \geq 0$, we define the scaled co-recursive associated polynomials
$p_l(x,m,\gamma,\delta)$ on $[-1,1]$ by the three-term recurrence relation
\begin{align} \label{equation-recursionassociatedscaledsymmetric}
b_{m+l+1}\, p_{l+1}(x,m,\gamma,\delta) &= (x - a_{m+l})\, p_l(x,m,\gamma,\delta) - b_{m+l}\,
p_{l-1}(x,m,\gamma,\delta), \notag\\
 & \qquad l =1,2,3,4 \ldots , \\
p_0(x,m,\gamma,\delta) &= 1, \quad p_{1}(x,m,\gamma,\delta) =
\frac{\delta x - a_m - \gamma}{\beta_{m+1}}. \notag
\end{align}
\end{Def}

The three-term recurrence relation of the co-recursive associated polynomials $p_{l+1}(x,m,\gamma,\delta)$
corresponds to the three-term recurrence relation of the associated polynomials $p_{l+1}(x,m)$ except for the formula of the initial polynomial
$p_{1}(x,m,\gamma,\delta)$.
For $c = 0$, $\gamma = 0$ and $\delta = 1$, we have the identities $p_l(x,0) = p_l(x,0,0,1) = b_0\, p_l(x)$.
The polynomials $p_l(x,m)$ and $p_l(x,m, \gamma, \delta)$
can be described with help of the symmetric Jacobi matrix $\Jj_n^m$, $0 \leq m \leq n $, defined by
\begin{equation} \label{equation-Jacobimatrix}
\Jj_n^m = \left(\begin{array}{cccccc}
a_m & b_{m+1} &  0 &  0 & \cdots & 0 \\
b_{m+1} &  a_{m+1} &  b_{m+2} &  0 & \cdots & 0 \\
0 &  b_{m+2}  & a_{m+2}  & b_{m+3} &  \ddots & \vdots \\
\vdots & \ddots & \ddots & \ddots & \ddots & 0\\
0 &  \cdots &  0&  b_{n-2} & a_{n-1} & b_{n-1} \\
0 &  \cdots &  \cdots & 0 & b_{n-1}& a_n
\end{array}\right).\end{equation}
If $m=0$, we write $\Jj_n$ instead of $\Jj_n^0$. Then, in view of the three-term recurrence formulas
\eqref{equation-recursionassociatedsymmetric} and \eqref{equation-recursionassociatedscaledsymmetric}, the polynomials $p_l(x,m)$
and $p_l(x,m,\gamma, \delta)$, $l \geq 1$, can be written as (cf. \cite[Theorem 2.2.4]{Ismail})
\begin{equation}
p_l(x,m) = \det(x \mathbf{1}_{l} - \Jj_{m+l-1}^m ), \label{equation-relation3termJacobimatrixassociated}
\end{equation}
and
\begin{equation}
p_l(x,m,\gamma,\delta) = \det \left(
x \left(\begin{array}{cc}
\delta &   0 \\
0 & \mathbf{1}_{l-1}
\end{array}\right)  - \Jj_{m+l-1}^m - \left(\begin{array}{cc}
\gamma &   0 \\
0 &  \mathbf{0}_{l-1}
\end{array}\right)\right), \label{equation-relation3termJacobimatrixscaled}
\end{equation}
where $\mathbf{1}_{l-1}$ denotes the $(l-1)$-dimensional identity matrix and $\mathbf{0}_{l-1}$ the $(l-1)$-dimensional zero matrix.

Next, we give a characterization of the functional $\eps(P)$
in terms of the expansion coefficients $c_l$ of the polynomial $P = \sum_{l=n}^m c_l p_l$.

\begin{Lem} \label{Lemma-characterizationofepsP}
For the polynomial $P(x) = \ds \sum_{l=0}^n c_l p_l(x)$, we have
\begin{align*}
\eps(P) &= \ccc^H \Jj_{n} \ccc,  & \text{if} \quad P \in \pia, \\
\eps(P) &= \tilde{\ccc}^H \Jj_{n}^m \tilde{\ccc}, & \text{if} \quad P \in \pib, \\
\eps(P) &= \tilde{\ccc}^H \Jj_{n}^m \tilde{\ccc} + (\eps(\Rcal)-a_m) |c_m|^2, & \text{if} \quad P \in \pic,
\end{align*}
with the coefficient vectors $\ccc = (c_0, \ldots, c_n)^T$ and $\tilde{\ccc} = (c_m, \ldots, c_n)^T$.
\end{Lem}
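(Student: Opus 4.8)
The plan is to compute $\eps(P)$ directly from its definition \eqref{equation-meanvalue} by inserting the orthonormal expansion of $P$ and exploiting the three-term recurrence \eqref{equation-recursionorthonormal}. Writing $P = \sum_{l} c_l p_l$, we have $\eps(P) = \int_{-1}^1 x\,|P(x)|^2\,w(x)\,dx = \sum_{k,l} \overline{c_k}\, c_l \int_{-1}^1 x\, p_l(x)\,\overline{p_k(x)}\, w(x)\,dx$, so everything reduces to evaluating the matrix entries $M_{kl} := \langle x\,p_l, p_k\rangle_w$. The central observation is that multiplication by $x$ acts tridiagonally on the orthonormal basis: rearranging \eqref{equation-recursionorthonormal} gives $x\,p_l(x) = b_{l+1}\,p_{l+1}(x) + a_l\,p_l(x) + b_l\,p_{l-1}(x)$. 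Taking the inner product with $p_k$ and using orthonormality, $\langle p_j, p_k\rangle_w = \delta_{jk}$, immediately yields $M_{kl} = b_{l+1}\delta_{k,l+1} + a_l\delta_{k,l} + b_l\delta_{k,l-1}$, which is exactly the $(k,l)$-entry of the Jacobi matrix $\Jj_n$ (symmetric since the $b_l$ are real and appear on both sub- and super-diagonals).

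This establishes the first identity: $\eps(P) = \sum_{k,l}\overline{c_k}\,c_l\,(\Jj_n)_{kl} = \ccc^H \Jj_n\,\ccc$ for $P\in\pia$. The case $P\in\pib$ is then essentially immediate: here the expansion runs only over $m\le l\le n$, so only the coefficients $c_m,\dots,c_n$ are nonzero, and the same computation restricts the double sum to indices in $\{m,\dots,n\}$. The relevant block of the multiplication matrix is precisely $\Jj_n^m$ as defined in \eqref{equation-Jacobimatrix}, giving $\eps(P) = \tilde\ccc^H \Jj_n^m\,\tilde\ccc$ with $\tilde\ccc = (c_m,\dots,c_n)^T$.

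For $P\in\pic$ the only genuine subtlety arises, so I expect that to be the main point requiring care. Here $P = c_m\Rcal + \sum_{l=m+1}^n c_l p_l$, where $\Rcal = p_m + \sum_{l=0}^{m-1} e_l p_l$ replaces the pure basis function $p_m$. I would split $\eps(P)$ by bilinearity into the contribution from the $p_l$ with $m+1\le l\le n$, the cross terms between $\Rcal$ and these $p_l$, and the diagonal term $|c_m|^2\,\eps(\Rcal)$. The key computations are the cross terms $\langle x\,\Rcal, p_l\rangle_w$ for $l\ge m+1$: since $\Rcal - p_m$ lies in $\pia$ truncated below degree $m$, and $x$ raises degree by at most one, the vector $x\,\Rcal$ has no component on $p_l$ for $l\ge m+2$, while its $p_{m+1}$-component comes solely from the leading term $p_m$ of $\Rcal$ and equals $b_{m+1}$. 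Thus the cross terms and the off-diagonal entries reproduce exactly the matrix $\Jj_n^m$ acting on $\tilde\ccc = (c_m,\dots,c_n)^T$, \emph{except} on the $(m,m)$ diagonal position, where the entry $a_m$ that $\Jj_n^m$ would contribute is replaced by the true value $\eps(\Rcal) = \langle x\,\Rcal,\Rcal\rangle_w$. Accounting for this single discrepancy gives $\eps(P) = \tilde\ccc^H\Jj_n^m\,\tilde\ccc + (\eps(\Rcal) - a_m)|c_m|^2$, as claimed. The main obstacle is verifying cleanly that the standardization $e_m = 1$ forces the $p_{m+1}$-coupling of $\Rcal$ to match that of $p_m$ and that $\Rcal$ introduces no spurious couplings to higher-degree basis elements; this follows from the degree count together with orthonormality, but it is the one place where the perturbation of the basis must be tracked explicitly.
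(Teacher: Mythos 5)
Your proposal is correct and takes essentially the same approach as the paper: both exploit the tridiagonal action of multiplication by $x$ on the orthonormal basis, obtained from the three-term recurrence \eqref{equation-recursionorthonormal} and orthonormality, to identify $\eps(P)$ with a quadratic form in the Jacobi matrix, with the $\pib$ case following by restricting the index range. For $\pic$, your bilinearity splitting together with the degree-count argument for the cross terms $\langle x\Rcal, p_l\rangle_w$ is just a slightly more explicit organization of the paper's step of inserting the expansion $\Rcal = p_m + \sum_{l=0}^{m-1} e_l p_l$ into the $\pia$ formula and regrouping, so the mathematical content is identical.
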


\begin{proof}
Using the three-term recurrence formula (\ref{equation-recursionorthonormal}) and the orthonormality relation of the polynomials $p_l$,
we get for $P \in \pia$
\begin{align*}
\eps(P) &= \int_{-1}^1 x \Big|\sum_{l=0}^n c_l p_l(x)\Big|^2 w(x) dx =\int_{-1}^1 \Big(\sum_{l=0}^n c_l x p_l(x)\Big)
\overline{\Big(\sum_{l=0}^n c_l p_l(x)\Big)}w(x) dx \\
&= \int_{-1}^1 \Big(\sum_{l=0}^n c_l \big(b_{l+1} p_{l+1}(x)
+ a_l p_l(x)+ b_{l} p_{l-1}(x) \big)\Big)
\overline{\Big(\sum_{l=0}^n c_l p_l(x)\Big)}w(x) dx  \\
 &= \sum_{l=0}^n a_l |c_l|^2 + \sum_{l=0}^{n-1}(b_{l+1} c_l \bar{c}_{l+1} + b_{l+1} \bar{c}_l c_{l+1}) = \ccc^H \Jj_{n} \ccc.
\end{align*}
If $c_0 = \ldots = c_{m-1} = 0$, we get the assertion for polynomials $P$ in the space $\pib$. If $P \in \pic$, then $P$ has
the representation
\[P(x) = c_m \left( p_m(x)+\sum_{l=0}^{m-1} e_l p_l(x) \right)+ \sum_{l=m+1}^{n} c_l p_l(x),\]
where the polynomial $\Rcal$ is given by $\Rcal(x) = p_m(x)+\sum_{l=0}^{m-1} e_l p_l(x)$.
Inserting this representation in the upper formula for $\eps(P)$, yields the identity
$\eps(P) = (\eps(\Rcal)-a_m)|c_m|^2 + \tilde{\ccc}^H \Jj_{n}^m \tilde{\ccc}$.
\end{proof}

Using the characterization of $\eps(P)$ in Lemma \ref{Lemma-characterizationofepsP}, we proceed to the solution of
the optimization problems \eqref{optimalpolynomiala}, \eqref{optimalpolynomialb} and \eqref{optimalpolynomialc}.

\begin{Thm} \label{Theorem-optimalpolynomial}
The solutions of the optimization problems \eqref{optimalpolynomiala}, \eqref{optimalpolynomialb} and \eqref{optimalpolynomialc} are given by
\begin{align}
\Ppa (x) &= \kappa_1 \sum_{l=0}^n p_l(\lambda_{n+1})\, p_l(x),
\displaybreak[0] \label{equation-optimalpolynomialJacobia}\\
\Ppb (x) &= \kappa_2 \sum_{l=m}^n p_{l-m}(\lambda_{n-m+1}^m,m)\, p_l(x),
\displaybreak[0] \label{equation-optimalpolynomialJacobib}\\
\Ppc (x) &= \kappa_3 \left( \Rcal(x) +
\sum_{l=m+1}^n p_{l-m}(\lambda_{n-m+1}^{\Rcal},m,\gamma_{\Rcal},\delta_{\Rcal})
p_l(x)\right), \label{equation-optimalpolynomialJacobic}
\end{align}
where $p_l(x,m)$ and $p_l(x,m,\gamma_{\Rcal},\delta_{\Rcal})$ denote the associated and the
scaled co-recursive associated polynomials as given in Definition \ref{definition-associatedJacobi} with the shift term
$\gamma_{\Rcal} := \eps(\Rcal)-a_m$ and the scaling factor $\delta_{\Rcal} := \|\Rcal\|_{w}^2$.\\
The values $\lambda_{n+1}$, $\lambda_{n-m+1}^m$ and $\lambda_{n-m+1}^{\Rcal}$
denote the largest zero of the polynomials
$p_{n+1}(x)$, $p_{n-m+1}(x,m)$ and $p_{n-m+1}(x,m,\gamma_{\Rcal},\delta_{\Rcal})$
in the interval $[-1,1]$, respectively. The constants $\kappa_1$, $\kappa_2$ and $\kappa_3$ are chosen such that the optimal polynomials
lie in the respective unit sphere and are uniquely determined up to multiplication with a complex scalar of absolute value one.
The maximal value of $\eps$ in the respective polynomial space is given by
\begin{align*}
M_{n}  &:= \max_{P\in \usa}\eps(P) = \lambda_{n+1},\\
M_{n}^m  &:= \max_{P\in \usb}\eps(P) = \lambda_{n-m+1}^{m},\\
M_n^{\Rcal}  &:= \max_{P\in \usc}\eps(P) = \lambda_{n-m+1}^{\Rcal}.
\end{align*}
\end{Thm}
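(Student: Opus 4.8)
The plan is to recast each optimization as a (generalized) Rayleigh quotient problem and then to read off the maximizer from the spectral theory of the Jacobi matrices, using the determinant identities \eqref{equation-relation3termJacobimatrixassociated} and \eqref{equation-relation3termJacobimatrixscaled} to translate eigenvalues into zeros of orthogonal polynomials. For \eqref{optimalpolynomiala}, Lemma~\ref{Lemma-characterizationofepsP} gives $\eps(P)=\ccc^H\Jj_n\ccc$, and orthonormality of the $p_l$ gives $\|P\|_w^2=\ccc^H\ccc$; hence maximizing $\eps$ over $\usa$ is the Rayleigh--Ritz problem for the Hermitian matrix $\Jj_n$, whose maximum equals its largest eigenvalue and is attained at a corresponding eigenvector. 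Problem \eqref{optimalpolynomialb} is identical after replacing $\Jj_n,\ccc$ by $\Jj_n^m,\tilde\ccc$.

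Next I would make the eigendata explicit. By \eqref{equation-relation3termJacobimatrixassociated} the characteristic polynomial of $\Jj_n$ is $p_{n+1}$ up to the positive constant $b_0$, and that of $\Jj_n^m$ is $p_{n-m+1}(\cdot,m)$, so the eigenvalues are exactly the zeros of these polynomials and the largest eigenvalues are $\lambda_{n+1}$ and $\lambda_{n-m+1}^m$; this already yields $M_n=\lambda_{n+1}$ and $M_n^m=\lambda_{n-m+1}^m$. To identify the eigenvector, I would substitute the vector $(p_0(\lambda),\dots,p_n(\lambda))^T$ into the recurrence \eqref{equation-recursionorthonormal}: a direct computation shows that $\Jj_n(p_0(\lambda),\dots,p_n(\lambda))^T$ equals $\lambda(p_0(\lambda),\dots,p_n(\lambda))^T$ plus a multiple of $p_{n+1}(\lambda)$ times the last coordinate vector, so at $\lambda=\lambda_{n+1}$ the residual vanishes and we obtain an eigenvector. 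The same argument with \eqref{equation-recursionassociatedsymmetric} gives the eigenvector $(p_0(\lambda_{n-m+1}^m,m),\dots,p_{n-m}(\lambda_{n-m+1}^m,m))^T$ of $\Jj_n^m$, which after shifting the index by $m$ is precisely the coefficient vector appearing in \eqref{equation-optimalpolynomialJacobib}.

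The case \eqref{optimalpolynomialc} carries the real work, because the leading basis element $\Rcal$ is neither normalized nor one of the $p_l$. Since $\deg\Rcal=m$, $\Rcal$ is orthogonal to $p_{m+1},\dots,p_n$, so the cross terms in the norm drop out and the constraint becomes $\delta_{\Rcal}|c_m|^2+\sum_{l=m+1}^n|c_l|^2=\tilde\ccc^H D\tilde\ccc=1$, where $D=\operatorname{diag}(\delta_{\Rcal},1,\dots,1)$ with $\delta_{\Rcal}=\|\Rcal\|_w^2>0$. Writing $E$ for the matrix with single nonzero entry $1$ in the top-left corner, the objective from Lemma~\ref{Lemma-characterizationofepsP} reads $\tilde\ccc^H(\Jj_n^m+\gamma_{\Rcal}E)\tilde\ccc$ with $\gamma_{\Rcal}=\eps(\Rcal)-a_m$. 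Thus \eqref{optimalpolynomialc} is the generalized Rayleigh quotient for the Hermitian pencil $(\Jj_n^m+\gamma_{\Rcal}E,\,D)$ with $D$ positive definite, whose maximum is the largest generalized eigenvalue. By \eqref{equation-relation3termJacobimatrixscaled} the pencil's characteristic polynomial $\det(xD-\Jj_n^m-\gamma_{\Rcal}E)$ equals $p_{n-m+1}(\cdot,m,\gamma_{\Rcal},\delta_{\Rcal})$, so the largest generalized eigenvalue is $\lambda_{n-m+1}^{\Rcal}$ and $M_n^{\Rcal}=\lambda_{n-m+1}^{\Rcal}$. Feeding $(1,p_1(\lambda,m,\gamma_{\Rcal},\delta_{\Rcal}),\dots,p_{n-m}(\lambda,m,\gamma_{\Rcal},\delta_{\Rcal}))^T$ into the generalized eigenvalue equation and matching it row by row against \eqref{equation-recursionassociatedscaledsymmetric}---whose modified first step encodes exactly the $\gamma_{\Rcal}$ and $\delta_{\Rcal}$ entries of the top row of the pencil---shows that this is the eigenvector for $\lambda=\lambda_{n-m+1}^{\Rcal}$, which is the coefficient vector in \eqref{equation-optimalpolynomialJacobic}.

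Finally, I would settle uniqueness: each of the matrices $\Jj_n$, $\Jj_n^m$, and (after the congruence $D^{-1/2}(\cdot)D^{-1/2}$) the pencil is an irreducible Hermitian tridiagonal matrix with strictly positive off-diagonal entries, so all its eigenvalues, in particular the largest, are simple; hence the optimal coefficient vector, and therefore the optimal polynomial, is determined up to a unimodular scalar, and the constants $\kappa_1,\kappa_2,\kappa_3$ are fixed by the unit-sphere normalization. The step I expect to be most delicate is the bookkeeping in \eqref{optimalpolynomialc}: correctly deriving the weighted constraint $\tilde\ccc^H D\tilde\ccc=1$ from the unnormalized $\Rcal$, and checking that \eqref{equation-relation3termJacobimatrixscaled} really produces the characteristic polynomial of the pencil $(\Jj_n^m+\gamma_{\Rcal}E,D)$ rather than an ordinary characteristic polynomial, so that the zeros of the scaled co-recursive associated polynomials align with the generalized eigenvalues.
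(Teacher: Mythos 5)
Your proposal is correct, and its skeleton coincides with the paper's proof: both rest on Lemma~\ref{Lemma-characterizationofepsP} to turn $\eps$ into a quadratic form in the coefficient vector, identify eigenvalues of $\Jj_n$, $\Jj_n^m$ and of the scaled problem with zeros of $p_{n+1}$, $p_{n-m+1}(\cdot,m)$ and $p_{n-m+1}(\cdot,m,\gamma_{\Rcal},\delta_{\Rcal})$ via \eqref{equation-relation3termJacobimatrixassociated} and \eqref{equation-relation3termJacobimatrixscaled}, and read off the optimal coefficient vectors from the recurrences \eqref{equation-recursionassociatedsymmetric} and \eqref{equation-recursionassociatedscaledsymmetric}. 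Where you genuinely deviate is in the treatment of \eqref{optimalpolynomialc} and of uniqueness. The paper handles \eqref{optimalpolynomialc} with a Lagrange multiplier: differentiating the Lagrange function yields $\Jj_n^m\tilde\ccc+\gamma_{\Rcal}(c_m,0,\dots,0)^T=\lambda(\delta_{\Rcal}c_m,c_{m+1},\dots,c_n)^T$ as a necessary condition, and it then asserts that the maximum sits at the largest root. You instead recognize the constrained problem as a generalized Rayleigh quotient for the Hermitian pencil $(\Jj_n^m+\gamma_{\Rcal}E,\,D)$ with $D=\operatorname{diag}(\delta_{\Rcal},1,\dots,1)$ positive definite; after the congruence $D^{-1/2}(\cdot)D^{-1/2}$ this gives, by Rayleigh--Ritz, that the maximum \emph{equals} the largest generalized eigenvalue, not merely that extremizers satisfy the eigen-equation. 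That is a modest but real gain in rigor, since first-order conditions alone do not locate the maximum among the $n-m+1$ roots. Similarly, for uniqueness the paper cites the simplicity of the largest zero of orthogonal polynomials (Chihara, Theorem 5.3), which for the scaled co-recursive family tacitly requires knowing these are again orthogonal polynomials; your observation that $D^{-1/2}(\Jj_n^m+\gamma_{\Rcal}E)D^{-1/2}$ is an irreducible symmetric tridiagonal matrix with positive off-diagonal entries, hence has simple spectrum, sidesteps that point and covers all three cases uniformly. Both routes buy the same theorem; yours is more self-contained on the matrix side, the paper's keeps the orthogonal-polynomial machinery (and its citations) in the foreground.
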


\begin{proof}
We start out by determining the optimal solution $\Ppb$ for
the optimization problem \eqref{optimalpolynomialb}. The formula for
the optimal polynomial $\Ppa$ follows as a special case if we set $m=0$. First of all, Lemma \ref{Lemma-characterizationofepsP} states that the
mean value $\eps(P)$ of a polynomial $P(t) = \sum_{l=m}^n c_l p_l(x)$
can be written as $\eps(P) = \tilde{\ccc}^H \Jj_{n}^m \tilde{\ccc}$ with the coefficient vector $\tilde{\ccc} = (c_m, \cdots, c_n)^T$.
Thus, maximizing $\eps(P)$ with respect to a normed polynomial $P \in \usb$
is equivalent to maximize the quadratic functional $\tilde{\ccc}^H \Jj_{n}^m \tilde{\ccc}$ subject to
$|\tilde{\ccc}|^2 = c_m^2 + c_{m+1}^2 + \cdots + c_n^2 = 1$.
If $\lambda_{n-m+1}^{m}$ denotes the largest eigenvalue of the symmetric Jacobi
matrix $\Jj_{n}^m$, we have
\begin{equation} \label{equation-extremalmatrix}
\tilde{\ccc}^H \Jj_{n}^m \tilde{\ccc} \leq \lambda_{n-m+1}^{m}|\tilde{\ccc}|^2
\end{equation}
and equality is attained for the eigenvectors corresponding to $\lambda_{n-m+1}^{m}$.
Now, the largest eigenvalue of the Jacobi matrix $\Jj_{n}^m$ corresponds exactly with the largest
zero of the associated polynomial $p_{n-m+1}(x,m)$ (cf. \cite[Theorem 1.31]{Gautschi}).
Using the recursion formula (\ref{equation-recursionassociatedsymmetric}) of the associated polynomials $p_{l}(x,m)$ with
$c_m = 1$ the eigenvalue equation $\Jj_n^m \tilde{\ccc} = \lambda_{n-m+1}^{m} \tilde{\ccc}$ yields
\begin{align*}
c_l = p_{l-m}(\lambda_{n-m+1}^{m},m), \quad l = m, \ldots n.
\end{align*}
Finally, we have to normalize the coefficients
$c_l$, $m \leq l \leq n$, such that $|\tilde{\ccc}|^2 =1$.
This is done by the absolute value of the constant $\kappa_2$.
The uniqueness (up to a complex scalar with absolute value $1$) of the optimal polynomial $\Ppa$
follows from the fact that the largest zero
of $p_{n-m+1}(x,m)$ is simple (see \cite[Theorem 5.3]{Chihara}).
The formula for $M_{n}^m$ follows directly from the estimate in (\ref{equation-extremalmatrix}).

We consider now the third polynomial space $\pic$. Lemma \ref{Lemma-characterizationofepsP} states that in this case the
mean value $\eps(P)$ of $P(x) = c_m \Rcal(x)+\sum_{l=m+1}^n c_l p_l(x)$
can be written as $\eps(P) = \tilde{\ccc}^H \Jj_{n}^m \tilde{\ccc} + (\eps(\Rcal)-a_m) |c_m|^2$,
with the coefficient vector $\tilde{\ccc} = (c_m, \cdots, c_n)^T$. Maximizing $\eps(P)$ with respect to a polynomial $P \in \usc$
is therefore equivalent to maximize the quadratic functional $\tilde{\ccc}^H \Jj_{n}^m \tilde{\ccc} + (\eps(\Rcal)-a_m) |c_m|^2$
subject to $(\|\Rcal\|_{w}^2-1) |c_m|^2+|\tilde{\ccc}|^2 = 1$.
Using a Lagrange multiplier $\lambda$ and differentiating the Lagrange function, we obtain the identity
\[\Jj_n^m \tilde{\ccc} + \gamma_{\Rcal} (c_m, 0 , \cdots, 0)^T = \lambda \big(\delta_{\Rcal} c_m, c_{m+1}, \cdots, c_n\big)^T \]
as a necessary condition for the maximum, where $\gamma_{\Rcal} = \eps(\Rcal)-a_m$ and
$\delta_{\Rcal} = \|\Rcal\|_{w}^2$. By the equation \eqref{equation-relation3termJacobimatrixscaled}, this system of equations
is related to the three-term recursion formula (\ref{equation-recursionassociatedscaledsymmetric})
of the scaled co-recursive associated polynomials $p_l(x,m,\gamma_{\Rcal},\delta_{\Rcal})$.
In particular, the value $\lambda$ corresponds to a root of $p_{n-m+1}(x,m,\gamma_{\Rcal},\delta_{\Rcal})$.
Moreover, the maximum of $\tilde{\ccc}^H \Jj_{n}^m \tilde{\ccc} + \gamma_{\Rcal} |c_m|^2$
is attained for the largest root $\lambda = \lambda_{n-m+1}^{\Rcal}$ of $p_{n-m+1}(x,m,\gamma_{\Rcal},\delta_{\Rcal})$ and the corresponding
eigenvector
\[\tilde{\ccc} = \kappa_3
\Big( 1, p_{1}(\lambda_{n-m+1}^{\Rcal},m,\gamma_{\Rcal},\delta_{\Rcal}),
\ldots, p_{n-m}(\lambda_{n-m+1}^{\Rcal},m,\gamma_{\Rcal},\delta_{\Rcal}) \Big)^T,\]
where the constant $\kappa_3$ is chosen such that the condition $(\delta_{\Rcal}-1) |c_m|^2+|\tilde{\ccc}|^2 = 1$ is satisfied.
The uniqueness of the polynomial $\Ppc$ (up to a complex scalar of absolute value one)
follows from the simplicity of the largest root $\lambda_{n-m+1}^{\Rcal}$ of the polynomials $p_l(x,m,\gamma_{\Rcal},\delta_{\Rcal})$
(see \cite[Theorem 5.3]{Chihara}).
From the above argumentation it is also clear that
the maximal value $M_n^{\Rcal}$ is precisely the largest eigenvalue $\lambda_{n-m+1}^{\Rcal}$.
\end{proof}

\section{Explicit expression for the optimally space localized polynomials} \label{Section-explicitexpressions}

Our next goal is to find explicit expressions for the optimal polynomials $\Ppa$, $\Ppb$ and $\Ppc$ derived in
Theorem \ref{Theorem-optimalpolynomial}. To this end, we need a
Christoffel-Darboux type formula for the associated polynomials $p_l(x,m)$ and
$p_l(x,m,\gamma,\delta)$.

\begin{Lem} \label{Lemma-ChristoffelDarbouxassociated}
Let $p_l(x,m)$ and $p_l(x,m,\gamma,\delta)$ be the associated and the scaled co-recursive associated
polynomials as defined in \eqref{equation-recursionassociatedsymmetric} and \eqref{equation-recursionassociatedscaledsymmetric}. Then, for $x \neq y$, 
the following Christoffel-Darboux type formulas hold:
\begin{align} \label{equation-ChristoffelDarbouxassociated}
\sum_{k=m}^n  & p_k(x)  p_{k-m}(y,m)  \\ &= b_{n+1}\frac{p_{n+1}(x)
p_{n-m}(y,m)-p_{n-m+1}(y,m)p_n(x)}{x-y}
+ b_m \frac{p_{m-1}(x)}{x-y}, \notag \displaybreak[0] \\
\sum_{k=m}^n  & p_k(x)  p_{k-m}(y,m,\gamma,\delta) \label{equation-ChristoffelDarbouxscaledassociated}
\\ &= b_{n+1}\frac{p_{n+1}(x)
p_{n-m}(y,m,\gamma,\delta)-p_{n-m+1}(y,m,\gamma,\delta)p_n(x)}{x-y} \notag
\\ & \hspace{1cm}+ \frac{p_m(x)((\delta-1) y-\gamma)}{x-y}+ b_m \frac{p_{m-1}(x)}{x-y}. \notag
\end{align}
\end{Lem}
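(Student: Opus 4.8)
The plan is to derive these identities exactly as one derives the classical Christoffel--Darboux formula, by manipulating the three-term recurrences telescopically. The classical formula follows from multiplying the recurrence for $p_{k+1}(x)$ by $p_k(y)$, the recurrence for $p_{k+1}(y)$ by $p_k(x)$, subtracting, and summing over $k$; the key feature is that the subtraction kills the diagonal $a_k$-terms (since they are symmetric in the two indices) while the off-diagonal $b$-terms telescope. Here the subtlety is that the two factors $p_k(x)$ and $p_{k-m}(y,m)$ satisfy recurrences with \emph{shifted} indices on the coefficients: $p_k$ uses $a_k,b_k$ while $p_{k-m}(\cdot,m)$ uses $a_{m+(k-m)}=a_k$ and $b_{m+(k-m)}=b_k$. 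This index alignment is precisely what makes the telescoping work, so I would first make this matching explicit.

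Concretely, I would write down, for each $k$ with $m \le k \le n$, the two relations
\begin{align*}
x\,p_k(x) &= b_{k+1}p_{k+1}(x)+a_k p_k(x)+b_k p_{k-1}(x),\\
y\,p_{k-m}(y,m) &= b_{k+1}p_{k-m+1}(y,m)+a_k p_{k-m}(y,m)+b_k p_{k-m-1}(y,m),
\end{align*}
using \eqref{equation-recursionorthonormal} and \eqref{equation-recursionassociatedsymmetric} together with the index identification above. Multiplying the first by $p_{k-m}(y,m)$, the second by $p_k(x)$, subtracting, and summing $k=m,\dots,n$ gives $(x-y)\sum_{k=m}^n p_k(x)p_{k-m}(y,m)$ on the left, since the $a_k$-terms cancel pairwise. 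On the right, the $b$-terms form a telescoping sum of the antisymmetric quantity $b_k\big(p_k(x)p_{k-m-1}(y,m)-p_{k-1}(x)p_{k-m}(y,m)\big)$; I would track the surviving boundary terms at $k=n$ (producing the $b_{n+1}$-numerator) and at $k=m$ (producing the $b_m p_{m-1}(x)$ term, using $p_{-1}(y,m)=0$ and $p_0(y,m)=1$). Dividing by $x-y$ yields \eqref{equation-ChristoffelDarbouxassociated}.

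For the co-recursive scaled version \eqref{equation-ChristoffelDarbouxscaledassociated}, the recurrences for $p_l(x,m,\gamma,\delta)$ agree with those of $p_l(x,m)$ for $l\ge 1$ and differ only in the initial datum $p_1(x,m,\gamma,\delta)=(\delta x - a_m-\gamma)/b_{m+1}$. Hence I would run the identical telescoping argument: the interior terms match verbatim and reproduce the $b_{n+1}$-numerator as before, while the only discrepancy enters at the bottom boundary. I expect the main obstacle to be bookkeeping of this boundary contribution at $k=m$: the modified $k=m$ relation for $p_0(y,m,\gamma,\delta)=1$ and $p_1(y,m,\gamma,\delta)$ introduces the extra factor $(\delta-1)y-\gamma$ (measuring the deviation of the modified initial step from the ordinary one), which, multiplied by $p_m(x)$, accounts for the additional term $p_m(x)((\delta-1)y-\gamma)/(x-y)$; the residual $b_m p_{m-1}(x)/(x-y)$ term survives unchanged. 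Carefully isolating exactly which boundary pieces persist after telescoping, and verifying that the $\delta$- and $\gamma$-corrections assemble into precisely that numerator, is the one place where sign and index errors could creep in, so I would double-check it by specializing to $\gamma=0,\delta=1$ and confirming that \eqref{equation-ChristoffelDarbouxscaledassociated} collapses to \eqref{equation-ChristoffelDarbouxassociated}.
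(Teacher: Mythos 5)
Your plan is correct and follows essentially the same route as the paper's proof: the same Christoffel--Darboux-style telescoping of the two aligned three-term recurrences (multiplying by the opposite factor, subtracting so the $a_k$-terms cancel, and tracking the boundary terms at $k=n$ and $k=m$), with the co-recursive case handled by isolating the deviation $(\delta-1)y-\gamma$ of the modified initial step at the bottom boundary. The paper organizes this by defining telescoping quantities $F_k$ (resp. $G_k$) and treating the $k=m$ term separately in the scaled case, but the computation is the one you describe.
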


\begin{proof}
We follow the lines of the proof of the original Christoffel-Darboux formula (see \cite[Theorem 4.5]{Chihara}).
By (\ref{equation-recursionorthonormal}) and (\ref{equation-recursionassociatedsymmetric}), we have for $k \geq m$ the identities
\begin{align*}
x p_k&(x)p_{k-m}(y,m)  \\&= b_{k+1} p_{k+1}(x)p_{k-m}(y,m)+
a_k p_k(x) p_{k-m}(y,m) + b_k p_{k-1}(x) p_{k-m}(y,m),\\
y p_k&(x)p_{k-m}(y,m)  \\&= b_{k+1} p_{k}(x)p_{k-m+1}(y,m)+
a_k p_k(x) p_{k-m}(y,m) + b_k p_{k}(x) p_{k-m-1}(y,m).
\end{align*}
Subtracting the second equation from the first, we get
\begin{align*}
(x-y)& p_k(x)p_{k-m}(y,m) \\
& = b_{k+1}\big(p_{k+1}(x)p_{k-m}(y,m) - p_{k}(x)p_{k-m+1}(y,m)\big) \\
& \quad - b_{k} \big(p_{k}(x)p_{k-m-1}(y,m) - p_{k-1}(x)p_{k-m}(y,m)\big).
\end{align*}
Let
\[F_k(x,y) = b_{k+1} \frac{p_{k+1}(x)p_{k-m}(y,m) - p_{k}(x)p_{k-m+1}(y,m)}{x-y}. \]
Then, the last equation can be rewritten as
\[ p_k(x)p_{k-m}(y,m)= F_k(x,y) - F_{k-1}(x,y), \quad k \geq m, \]
where $F_{m-1}(x,y) = - b_m p_{m-1}(x)$.
Summing the latter from $m$ to $n$, we obtain (\ref{equation-ChristoffelDarbouxassociated}). \\
Analogously, we get for the scaled co-recursive associated polynomials
\begin{align*}
p_k(x)p_{k-m}(y,m,\gamma,\delta) &= G_k(x,y) - G_{k-1}(x,y), \quad k \geq m+1, \\
p_m(x)p_{0}(y,m,\gamma,\delta) &= p_m(x), \\
\end{align*}
where
\begin{align*}
G_k(x,y) &= b_{k+1}\frac{p_{k+1}(x)p_{k-m}(y,m,\gamma,\delta) -
p_{k}(x)p_{k-m+1}(y,m,\gamma,\delta)}{x-y},  \\
G_m(x,y) &= \frac{b_{m+1}p_{m+1}(x) -
p_{m}(x)(\delta y-a_m - \gamma)}{x-y}, \quad k \geq m+1.
\end{align*}
Then, summing from $m$ to $n$, we get
\begin{align*}
\sum_{k=m}^n   p_k&(x)p_{k-m}(y,m,\gamma,\delta) = \sum_{k=m+1}^n (G_{k}(x,y)-G_{k-1}(x,y)) +
p_m(x) \\ = & G_n(x,y) - \frac{ b_{m+1} p_{m+1}(x) +
p_{m}(x)(\delta y-a_m - \gamma)}{x-y} + \frac{p_m(x)(x-y)}{x-y} \\ =&
G_n(x,y) + \frac{p_m(x)((\delta-1) y-\gamma)}{x-y}+ b_m \frac{p_{m-1}(x)}{x-y}.
\end{align*}
Hence, we obtain formula \eqref{equation-ChristoffelDarbouxscaledassociated}.
\end{proof}

As a direct consequence of the Christoffel-Darboux type formulas in Lemma \ref{Lemma-ChristoffelDarbouxassociated}, we
get the following explicit formulas for the optimal polynomials in Theorem \ref{Theorem-optimalpolynomial}:

\begin{Cor} \label{Corollary-explicitformoptimalpolynomials}
The optimal polynomials $\Ppa$, $\Ppb$ and $\Ppc$ in Theorem \ref{Theorem-optimalpolynomial} have the explicit form
\begin{align*}
\Ppa(x) &= \kappa_1 b_{n+1} \frac{p_{n+1}(x)
p_{n}(\lambda_{n+1})}{x -\lambda_{n+1}},
\displaybreak[0]\\
\Ppb(x) &= \kappa_2  \frac{ b_{n+1}p_{n+1}(x)
p_{n-m}(\lambda_{n-m+1}^m,m)+ b_m p_{m-1}(x )}{x -\lambda_{n-m+1}^{m}}, \displaybreak[0]\\
\Ppc(x) &= \kappa_3 \left( \Rcal(x) + \frac{b_{n+1} p_{n+1}(x)
p_{n-m}(\lambda_{n-m+1}^{\Rcal},m,\gamma_{\Rcal},\delta_{\Rcal})}
{x -\lambda_{n-m+1}^{\Rcal}} \right. \displaybreak[0]
\\ & \hspace{2cm}+ \left. \frac{p_m(x )((\delta_{\Rcal}-1) \lambda_{n-m+1}^{\Rcal}-\gamma_{\Rcal})+ b_m p_{m-1}(x )}
{x -\lambda_{n-m+1}^{\Rcal}}\right),
\end{align*}
where the constants $\kappa_1$, $\kappa_2$, $\kappa_3$ and the roots $\lambda_{n+1}$ $\lambda_{n-m+1}^{m}$ and $\lambda_{n-m+1}^{\Rcal}$ are given as
in Theorem \ref{Theorem-optimalpolynomial}.
\end{Cor}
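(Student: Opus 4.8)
The plan is to read off all three identities directly from the Christoffel--Darboux type formulas of Lemma \ref{Lemma-ChristoffelDarbouxassociated}, the only new input being that the defining abscissas are roots of the relevant polynomials. Concretely, in \eqref{equation-ChristoffelDarbouxassociated} and \eqref{equation-ChristoffelDarbouxscaledassociated} I would keep $x$ free and substitute $y = \lambda_{n-m+1}^m$ (respectively $y = \lambda_{n-m+1}^{\Rcal}$). By Theorem \ref{Theorem-optimalpolynomial} these are exactly the largest zeros of $p_{n-m+1}(\cdot,m)$ (respectively $p_{n-m+1}(\cdot,m,\gamma_{\Rcal},\delta_{\Rcal})$), so the summand $p_{n-m+1}(y,\ldots)\,p_n(x)$ in each numerator vanishes identically. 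The identity then holds for all $x \neq \lambda$, and since the left-hand sides are polynomials in $x$ the apparent pole at $x=\lambda$ is removable, so the resulting expressions are genuine polynomials.

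For $\Ppb$ I would start from \eqref{equation-optimalpolynomialJacobib}, namely $\Ppb(x)=\kappa_2\sum_{l=m}^n p_{l-m}(\lambda_{n-m+1}^m,m)\,p_l(x)$, and recognise the sum as the left-hand side of \eqref{equation-ChristoffelDarbouxassociated} with $y=\lambda_{n-m+1}^m$. After deleting the vanishing term this gives exactly
\[
\Ppb(x)=\kappa_2\,\frac{b_{n+1}\,p_{n+1}(x)\,p_{n-m}(\lambda_{n-m+1}^m,m)+b_m\,p_{m-1}(x)}{x-\lambda_{n-m+1}^m},
\]
as claimed. The formula for $\Ppa$ is then the special case $m=0$: here the boundary term $b_m\,p_{m-1}(x)$ becomes $b_0\,p_{-1}(x)=0$ by the initial condition in \eqref{equation-recursionorthonormal}, so only the first summand survives and one obtains $\Ppa(x)=\kappa_1\, b_{n+1}\,p_{n+1}(x)\,p_n(\lambda_{n+1})/(x-\lambda_{n+1})$.

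The case $\Ppc$ is where the bookkeeping must be done carefully, and I expect this to be the only real obstacle. Starting from \eqref{equation-optimalpolynomialJacobic}, the sum runs only over $l=m+1,\ldots,n$, whereas the left-hand side of \eqref{equation-ChristoffelDarbouxscaledassociated} runs over $k=m,\ldots,n$ and contains the extra term $p_m(x)\,p_0(\lambda_{n-m+1}^{\Rcal},m,\gamma_{\Rcal},\delta_{\Rcal})=p_m(x)$ coming from the normalisation $p_0\equiv 1$. One therefore has to match the $\Rcal$-direction of $\pic$ against this $k=m$ contribution: writing $\Rcal(x)=p_m(x)+\sum_{l=0}^{m-1}e_l\,p_l(x)$ and substituting $y=\lambda_{n-m+1}^{\Rcal}$ into \eqref{equation-ChristoffelDarbouxscaledassociated} (again killing the $p_{n-m+1}$ term), the remaining right-hand side furnishes the two explicit fractions in the statement. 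The delicate point is that the $p_m(x)$ produced by the $k=m$ index must be tracked through this substitution and reconciled with the replacement of $p_m$ by $\Rcal$, so that the final expression is assembled with the correct constant multiple of $p_m(x)$; a careless treatment loses exactly this term. Throughout, the values $\gamma_{\Rcal}=\eps(\Rcal)-a_m$ and $\delta_{\Rcal}=\|\Rcal\|_w^2$ from Theorem \ref{Theorem-optimalpolynomial} are inserted verbatim, and the normalising constants $\kappa_1,\kappa_2,\kappa_3$ are carried over unchanged.
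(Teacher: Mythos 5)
Your overall route is exactly the paper's: the corollary is stated there as a direct consequence of Lemma \ref{Lemma-ChristoffelDarbouxassociated}, obtained by substituting for $y$ the relevant largest zero so that the summand containing $p_{n-m+1}(y,\cdot)$ drops out, and your treatment of $\Ppa$ and $\Ppb$ is complete and correct (including the observation that for $m=0$ the boundary term dies because $p_{-1}\equiv 0$; the normalization $p_l(x,0)=b_0\,p_l(x)$ is harmlessly absorbed into $\kappa_1$).

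The gap is in the case of $\Ppc$, at precisely the step you flag and then defer. Carry the bookkeeping out: since $p_0(y,m,\gamma_{\Rcal},\delta_{\Rcal})=1$, the $k=m$ summand on the left-hand side of \eqref{equation-ChristoffelDarbouxscaledassociated} equals $p_m(x)$, so with $\lambda=\lambda_{n-m+1}^{\Rcal}$ one gets
\begin{align*}
\sum_{l=m+1}^n p_{l-m}(\lambda,m,\gamma_{\Rcal},\delta_{\Rcal})\,p_l(x)
&= -\,p_m(x) + \frac{b_{n+1}\,p_{n+1}(x)\,p_{n-m}(\lambda,m,\gamma_{\Rcal},\delta_{\Rcal})}{x-\lambda} \\
&\qquad + \frac{p_m(x)\bigl((\delta_{\Rcal}-1)\lambda-\gamma_{\Rcal}\bigr)+b_m\,p_{m-1}(x)}{x-\lambda}.
\end{align*}
Adding $\Rcal(x)$ as in \eqref{equation-optimalpolynomialJacobic} therefore yields $\Ppc=\kappa_3\bigl(\Rcal(x)-p_m(x)+\text{the two fractions}\bigr)$: the extra $-p_m(x)$ cancels against nothing, so the ``careful tracking'' you promise cannot reproduce the displayed formula verbatim; it produces it with $\Rcal(x)$ replaced by $\Rcal(x)-p_m(x)=\sum_{l=0}^{m-1}e_l\,p_l(x)$. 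Two checks confirm the discrepancy is real and not a slip in this computation. First, the displayed right-hand side equals an element of $\pic$ plus $\kappa_3\,p_m(x)$, and $p_m\notin\pic$ unless all $e_l$ vanish, so as printed it cannot equal $\Ppc$ in general. Second, in the degenerate case $n=m$ one has $\lambda=(a_m+\gamma_{\Rcal})/\delta_{\Rcal}$, and the two fractions collapse via the three-term recurrence to $p_m(x)$, so the printed formula gives $\kappa_3\bigl(\Rcal(x)+p_m(x)\bigr)$, whereas obviously $\Ppc=\kappa_3\,\Rcal(x)$ there. So your proof of the third identity is incomplete in a way that matters: the step you leave implicit is exactly where the statement as printed fails (it needs the correction $\Rcal\mapsto\Rcal-p_m$), and any write-up claiming to reach the printed formula verbatim must contain an error at that step.
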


\section{Optimally space localized polynomials for Jacobi expansions} \label{Section-optimalpolynomials}

In this section, we will see that in the case of the Jacobi polynomials the generalized mean value $\eps(f)$
is related to an uncertainty principle and that the optimal polynomials $\Ppa$, $\Ppb$ and $\Ppc$ minimize the
term $\var_S(f)$ for the position variance of the uncertainty principle.

In the following, the weight function $w = w_{\alpha\beta}$ under consideration is the Jacobi weight function
\[w_{\alpha\beta}(x) = (1-x)^{\alpha}(1+x)^{\beta}, \qquad x \in [-1,1], \quad \alpha, \beta \geq -\frac{1}{2}.\]
The corresponding orthonormal polynomials $p_n^{(\alpha,\beta)}(x)$ are called Jacobi polynomials and satisfy the differential equation $L_{\alpha\beta} p_n^{(\alpha,\beta)} = -n(n + \alpha + \beta + 1) p_n^{(\alpha,\beta)}$,
where the second-order differential operator $L_{\alpha\beta}$ is given as (cf. \cite[Theorem 4.2.2.]{Szegö})
\[ L_{\alpha\beta} f(x) = (1-x^2) \frac{d^2}{dx^2} f(x) + (\beta-\alpha + x (\alpha+\beta+2)) \frac{d}{dx} f(x). \]
The next theorem states a well-known uncertainty principle for functions having an expansion in terms of Jacobi polynomials.

\begin{Thm}\label{Theorem-uncertaintyJacobi}
Let $f\in C^2([-1,1]) \cap L^2([-1,1],w_{\alpha\beta})$ such that $\|f\|_{w_{\alpha\beta}} = 1$. Further, let
\[(\alpha-\beta)+(\alpha+\beta+2)\eps(f) \neq 0. \]
Then, the following uncertainty inequality holds:
\begin{equation} \label{equation-uncertaintyJacobi}
\frac{1-\eps(f)^2}{|\frac{\alpha-\beta}{\alpha+\beta+2}+\eps(f)|^2} \cdot \langle - L_{\alpha\beta} f, f \rangle_{w_{\alpha\beta}} > \frac{(\alpha+\beta+2)^2}{4}.
\end{equation}
The constant $\frac{(\alpha+\beta+2)^2}{4}$ on the right hand side of $\eqref{equation-uncertaintyJacobi}$ is optimal.
\end{Thm}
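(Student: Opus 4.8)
The plan is to read \eqref{equation-uncertaintyJacobi} as a Heisenberg-type inequality that falls out of Cauchy--Schwarz once the two quantities $\langle -L_{\alpha\beta}f,f\rangle_{w_{\alpha\beta}}$ and $\eps(f)$ are rewritten by integration by parts. Writing $N:=\alpha+\beta+2$ and $\mu:=\eps(f)$ and clearing the positive denominator, the asserted bound is equivalent to
\[
(1-\mu^2)\,\langle -L_{\alpha\beta}f,f\rangle_{w_{\alpha\beta}} > \tfrac14\big[(\alpha-\beta)+N\mu\big]^2 .
\]
First I would put $L_{\alpha\beta}$ in its Sturm--Liouville form $L_{\alpha\beta}f = w_{\alpha\beta}^{-1}\tfrac{d}{dx}\big[(1-x^2)w_{\alpha\beta}f'\big]$. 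Since $(1-x^2)w_{\alpha\beta}(x)=(1-x)^{\alpha+1}(1+x)^{\beta+1}$ vanishes at $x=\pm1$ while $f\in C^2$, integration by parts produces no boundary contribution and yields the frequency term
\[
\langle -L_{\alpha\beta}f,f\rangle_{w_{\alpha\beta}} = \int_{-1}^1 (1-x^2)\,|f'(x)|^2\,w_{\alpha\beta}(x)\,dx .
\]

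Next I would compute the mixed term. Taking real parts and integrating by parts once more (the boundary term again drops), and using $\big[(1-x^2)w_{\alpha\beta}\big]' = \big[(\beta-\alpha)-Nx\big]w_{\alpha\beta}$ together with $\|f\|_{w_{\alpha\beta}}=1$, one gets
\[
\Re\int_{-1}^1 (1-x^2)f'(x)\overline{f(x)}\,w_{\alpha\beta}(x)\,dx = -\tfrac12\int_{-1}^1 \big[(1-x^2)w_{\alpha\beta}\big]'\,|f|^2\,dx = \tfrac12\big[(\alpha-\beta)+N\mu\big].
\]
So the right-hand side of the target inequality is exactly the square of this real part. Applying Cauchy--Schwarz to the factorisation $(1-x^2)f'\bar f = \big(\sqrt{1-x^2}\,f'\big)\overline{\big(\sqrt{1-x^2}\,f\big)}$ and $\Re z\le|z|$ then gives
\[
\tfrac14\big[(\alpha-\beta)+N\mu\big]^2 \le \Big(\int_{-1}^1(1-x^2)|f'|^2 w_{\alpha\beta}\Big)\Big(\int_{-1}^1(1-x^2)|f|^2 w_{\alpha\beta}\Big).
\]
Finally $\int_{-1}^1(1-x^2)|f|^2 w_{\alpha\beta} = 1-\int_{-1}^1 x^2|f|^2 w_{\alpha\beta} < 1-\mu^2$, the strict inequality being the positivity of the variance of $x$ under the non-degenerate, absolutely continuous probability density $|f|^2 w_{\alpha\beta}$. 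Combining the two displays proves the claim, provided the factor $\int(1-x^2)|f'|^2 w_{\alpha\beta}$ is strictly positive; and this is exactly what the hypothesis $(\alpha-\beta)+N\eps(f)\neq0$ secures, since by the mixed-term identity a vanishing frequency term would force $f$ constant and hence $(\alpha-\beta)+N\mu=0$.

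It remains to show that the constant $N^2/4$ is optimal, and this asymptotic statement is the only delicate point. Strict equality can never hold (the two equality conditions, $f'=\lambda f$ for Cauchy--Schwarz and concentration of $|f|^2 w_{\alpha\beta}$ for the variance, are incompatible), so optimality must be exhibited by an extremal sequence. I would test the ratio on functions concentrating at $x=1$, for instance the exponentials $f_\lambda(x)=\kappa_\lambda e^{\lambda x}$ (for which the linear-dependence condition $f'=\lambda f$ holds exactly, making the Cauchy--Schwarz step asymptotically sharp) and let $\lambda\to\infty$, or equivalently the optimally localized polynomials $\Ppa$ of Theorem \ref{Theorem-optimalpolynomial} as $n\to\infty$. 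Using the boundary behaviour $1-\eps(f_\lambda)\sim c/\lambda$ and the matching blow-up $\langle -L_{\alpha\beta}f_\lambda,f_\lambda\rangle_{w_{\alpha\beta}}\sim (\alpha+1)\lambda$, one checks that the left-hand side of \eqref{equation-uncertaintyJacobi} tends to $N^2/4$. Controlling both the vanishing factor $1-\eps(f_\lambda)^2$ and the diverging frequency term so that their product converges to the sharp constant is the main obstacle; everything preceding it is the routine Heisenberg computation above.
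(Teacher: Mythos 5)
The first thing to note is that the paper does not prove Theorem \ref{Theorem-uncertaintyJacobi} at all: it is quoted from the literature, the following Remark attributing the ultraspherical case to \cite{RoeslerVoit1997} and the Jacobi case to \cite{LiLiu2003}. So your proposal is not ``the paper's proof done differently''; it is a self-contained proof where the paper offers only a citation, and in substance it is the standard argument of those references. Your derivation of the inequality itself is correct and complete. The Sturm--Liouville form $L_{\alpha\beta}f = w_{\alpha\beta}^{-1}\,[(1-x^2)w_{\alpha\beta}f']'$ is the right one (note that the paper's displayed formula for $L_{\alpha\beta}$ carries a sign typo, $+x(\alpha+\beta+2)$ instead of $-x(\alpha+\beta+2)$ in the drift term; your computation silently uses the correct operator, without which the identity $\langle -L_{\alpha\beta}f,f\rangle_{w_{\alpha\beta}}=\int_{-1}^1(1-x^2)|f'|^2w_{\alpha\beta}\,dx$ and indeed the theorem itself would fail). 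The boundary terms vanish because $(1-x^2)w_{\alpha\beta}=(1-x)^{\alpha+1}(1+x)^{\beta+1}$ with $\alpha+1,\beta+1\geq\tfrac12$; the mixed-term identity, the Cauchy--Schwarz step, and the strict bound $\int_{-1}^1(1-x^2)|f|^2w_{\alpha\beta}\,dx<1-\eps(f)^2$ (strict positivity of the variance of an absolutely continuous probability density) are all correct; and you rightly observe that the hypothesis $(\alpha-\beta)+(\alpha+\beta+2)\eps(f)\neq0$ excludes the degenerate case of constant $f$, which is what makes the final strict inequality legitimate.

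On sharpness, your primary suggestion works and the asymptotics you state are the right ones: for $f_\lambda=\kappa_\lambda e^{\lambda x}$ the density $|f_\lambda|^2w_{\alpha\beta}$ concentrates at $x=1$ like a Gamma distribution with shape parameter $\alpha+1$ in the variable $u=2\lambda(1-x)$, giving $1-\eps(f_\lambda)\sim(\alpha+1)/(2\lambda)$ and $\var_F(f_\lambda)=\lambda^2\int_{-1}^1(1-x^2)|f_\lambda|^2w_{\alpha\beta}\,dx\sim(\alpha+1)\lambda$, whence $\var_S(f_\lambda)\,\var_F(f_\lambda)\to(\alpha+\beta+2)^2/4$; Cauchy--Schwarz is exact for this family, and the only loss (the variance term) is of relative order $1/\lambda$, so the product does converge to the sharp constant. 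Completing this is routine Laplace-type asymptotics, so I would not call it a genuine gap, though it is the part you left unfinished. However, the parenthetical claim that one could ``equivalently'' use the optimal polynomials $\Ppa$ of Theorem \ref{Theorem-optimalpolynomial} as $n\to\infty$ is false. In the Chebyshev case $\alpha=\beta=-\tfrac12$ the coefficients of $\Ppa$ are proportional to $\cos(l\pi/(2n+2))$, and a direct computation gives $\var_S(\Ppa)\,\var_F(\Ppa)\to\tfrac{\pi^2}{12}-\tfrac12\approx0.32$, strictly above the sharp constant $\tfrac14$: extremality for $\eps$ does not imply asymptotic extremality for the uncertainty product. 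This is consistent with the paper's own remark after Theorem \ref{Theorem-minequivalenttomax}, which claims only uniform boundedness of the product for $\Ppa$, not convergence to $(\alpha+\beta+2)^2/4$. So keep the exponential family and drop the polynomial alternative.
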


\begin{Rem}
Theorem \ref{Theorem-uncertaintyJacobi} has been proven for ultraspherical expansions in \cite{RoeslerVoit1997} and was generalized
to the Jacobi case in \cite{LiLiu2003}. Hereby, the notation in \cite{LiLiu2003} differs slightly from the notation above. For a more detailed discussion of Theorem \ref{Theorem-uncertaintyJacobi} see also \cite{Erb2010}, \cite{ErbDiss}, \cite{GohGoodman2004} and \cite{Selig2002}.
\end{Rem}

The terms
\begin{align} \label{equation-positionvarianceJacobi}
\var_{S}(f) &:= \frac{1-\eps(f)^2}{\big(\frac{\alpha-\beta}{\alpha +\beta+2}+ \eps(f)\big)^2}, \\
\var_{F}(f) &:= \langle - L_{\alpha\beta} f, f \rangle_{w_{\alpha\beta}}
\end{align}
in inequality \eqref{equation-uncertaintyJacobi} are called the position and the frequency variance of the function $f$, respectively.
As the generalized mean value $\eps(f)$, also the position variance $\var_{S}(f)$ defines a
measure for the localization of the function $f$ at the boundary points of the interval $[-1,1]$. In particular, the more mass of
the $L^2$-density $f$ is concentrated at the boundary points, the smaller the position variance $\var_S(f)$ gets. The next Theorem shows that both measures are in principle equivalent.
The only thing one has to take account of is that, in contrast to $\eps(f)$, the position variance $\var_S(f)$ does
not differ between the two boundary points. Therefore, one has to restrict the set of admissible functions in the optimization problem:
\begin{align*}
\Loc_{n} &:= \{P \in \usa: \;\eps(P) > \lambda_1 \}, \\
\Loc_{n}^m &:= \{P \in \usb: \;\eps(P) > \lambda_1 \}, \\
\Loc_n^{\Rcal} &:= \{P \in \usc: \;\eps(P) > \lambda_1 \}, \\
\end{align*}
where $\lambda_1 = \frac{\beta-\alpha}{2+\alpha+\beta}$ corresponds to the sole root of the
Jacobi polynomial $p_1^{(\alpha,\beta)}(x)$ of degree $1$.

\begin{Thm} \label{Theorem-minequivalenttomax} If the sets $\Loc_{n}$, $\Loc_{n}^{m}$ and $\Loc_{n}^{\Rcal}$ are nonempty, then
\begin{align*}
\arg\min_{P \in \Loc_{n}} \var_{S}(P) &= \arg\max_{P \in \Loc_{n}} \eps(P) = \Ppa,\\
\arg\min_{P \in \Loc_{n}^m} \var_{S}(P) &= \arg\max_{P \in \Loc_{n}^m} \eps(P) = \Ppb,\\
\arg\min_{P \in \Loc_n^{\Rcal}} \var_{S}(P) &= \arg\max_{P \in \Loc_n^{\Rcal}} \eps(P) = \Ppc.
\end{align*}
Hence, from all polynomials in the sets $\Loc_{n}$, $\Loc_{n}^m$, $\Loc_{n}^\Rcal$, the optimal polynomials $\Ppa$, $\Ppb$, $\Ppc$ minimize the
position variance $\var_S$.
\end{Thm}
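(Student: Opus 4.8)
The plan is to exploit that, for the Jacobi weight, the position variance $\var_S(P)$ is not independent of the mean value $\eps(P)$ but a function of $\eps(P)$ alone. Setting $c := \frac{\alpha-\beta}{\alpha+\beta+2}$ and $g(e) := \frac{1-e^2}{(c+e)^2}$, the defining formula \eqref{equation-positionvarianceJacobi} reads $\var_S(P) = g(\eps(P))$. I would first note that $\lambda_1 = \frac{\beta-\alpha}{2+\alpha+\beta} = -c$, so the constraint $\eps(P) > \lambda_1$ built into $\Loc_n$, $\Loc_n^m$ and $\Loc_n^{\Rcal}$ is precisely the condition $c + \eps(P) > 0$, keeping $\eps(P)$ on one side of the pole of $g$ at $e = -c$.

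The heart of the argument is to show that $g$ is strictly decreasing on $(\lambda_1,1) = (-c,1)$. I would differentiate to obtain
\begin{equation*}
g'(e) = \frac{-2(ce+1)}{(c+e)^3},
\end{equation*}
and then control its sign. Here I would use two facts: that $|c| < 1$, which follows from $\alpha,\beta \geq -\frac12$ since $|\alpha-\beta| < \alpha+\beta+2$ is equivalent to $\alpha > -1$ and $\beta > -1$; and that a normalized polynomial satisfies $-1 < \eps(P) < 1$. Together these place every relevant $e = \eps(P)$ in $(-c,1) \subset (-1,1)$, on which $(c+e)^3 > 0$ and $|ce| < 1$, so that $ce + 1 > 0$ and hence $g'(e) < 0$. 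This establishes strict monotonicity.

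It then remains to combine this with Theorem \ref{Theorem-optimalpolynomial}. Assuming $\Loc_n$ is nonempty, any $P_0 \in \Loc_n$ gives $\eps(\Ppa) = \max_{P\in\usa}\eps(P) \geq \eps(P_0) > \lambda_1$, so $\Ppa \in \Loc_n$; since $\Loc_n \subset \usa$ and $\Ppa$ already maximizes $\eps$ over $\usa$, it maximizes $\eps$ over $\Loc_n$ as well. Strict monotonicity of $g$ then turns minimization of $\var_S(P) = g(\eps(P))$ into maximization of $\eps(P)$, and transports the uniqueness of the $\eps$-maximizer from Theorem \ref{Theorem-optimalpolynomial} to the $\var_S$-minimizer, yielding $\Ppa$. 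The arguments for $\Loc_n^m$ and $\Loc_n^{\Rcal}$ are verbatim the same with $\Ppb$ and $\Ppc$.

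I expect the main obstacle to be pinning the sign of $g'$ down uniformly over the whole admissible interval, and this is exactly where the restriction $\eps(P) > \lambda_1$ is indispensable: without it the argument of $g$ could reach or cross the singularity at $e = -c$, where the equivalence between minimizing $\var_S$ and maximizing $\eps$ breaks down.
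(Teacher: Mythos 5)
Your proposal is correct and follows essentially the same route as the paper: both treat $\var_S$ as a function of $\lambda=\eps(P)$ alone, compute the derivative $\frac{-2(1-\lambda_1\lambda)}{(\lambda-\lambda_1)^3}$ (your $g'$ with $c=-\lambda_1$), and use its strict negativity on $(\lambda_1,1)$ to convert minimization of $\var_S$ into maximization of $\eps$ via Theorem \ref{Theorem-optimalpolynomial}. Your write-up is in fact slightly more careful than the paper's, since you explicitly verify $|\lambda_1|<1$ from $\alpha,\beta\geq-\frac12$ and check that $\Ppa$, $\Ppb$, $\Ppc$ actually lie in the restricted sets $\Loc_n$, $\Loc_n^m$, $\Loc_n^{\Rcal}$.
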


\begin{proof}
We consider the space variance $\var_{S}$ as a function of $\lambda = \eps(f)$. We have
\begin{align*}
\var_{S}(\lambda) &= \frac{1-\lambda^2}{(\lambda-\lambda_1)^2},\\
\frac{d \var_{S}}{d\lambda} (\lambda) &= \frac{-2(\lambda-\lambda_1)\lambda-2(1-\lambda^2)}{(\lambda-\lambda_1)^3}
= \frac{-2(1-\lambda_1 \lambda)}{(\lambda-\lambda_1)^3}.
\end{align*}
Therefore, the derivative $\frac{d}{d\lambda} \var_{S}$ is strictly decreasing on the open interval $(\lambda_1,1)$ and
strictly increasing on $(-1,\lambda_1)$.
So, for $P \in \Loc_{n}, \Loc_{n}^m,\Loc_n^{\Rcal}$, maximizing $\eps(P)$
yields the same result as minimizing the position variance $\var_{S}(P)$.
\end{proof}

\begin{Rem} \label{Remark-nonemptinessofsetsJacobi}
Whereas it can not be guaranteed that the sets $\Loc_{n}^m$ and $\Loc_n^{\Rcal}$
are nonempty, the non-emptiness of the sets $\Loc_{n}$, $n \geq 1$, is a consequence of
the interlacing property of the zeros
of the Jacobi polynomials (cf. \cite[Theorem 3.3.2]{Szegö}, \cite[Theorem 5.3]{Chihara}). Namely, this interlacing property implies that
$\eps(\Ppa) = \lambda_{n+1} > \lambda_n > \ldots > \lambda_1$.
\end{Rem}

\begin{Rem}
It can be shown that the uncertainty product $\var_S(\Ppa) \cdot \var_F(\Ppa)$ in \eqref{equation-uncertaintyJacobi} of the optimal polynomials $\Ppa$ is uniformly bounded by a constant independent of the degree $n$. Hence, the polynomials $\Ppa$ are not only well localized in space, but also in space and frequency. The quite technical proof can be found in \cite{ErbDiss}.
\end{Rem}

\begin{Exa} \label{example-optimallocalizedchebyshev}
As a final example, we consider the orthonormal Chebyshev polynomials $t_n$ of first kind, i.e., the Jacobi polynomials $p_n^{(\alpha,\beta)}$ with $\alpha = \beta = -\frac{1}{2}$ and the weight function $w_{\alpha\beta}(x) = 1$. The orthonormal Chebyshev polynomials are explicitly given as
(see \cite[p.~28-29]{Gautschi})
\[ t_0(x) = \ts \frac{1}{\sqrt{\pi}}, \quad t_n(x) = \ts \sqrt{\frac{2}{\pi}} \cos n t, \quad n \geq 1, \]
where $x = \cos t$. The largest zero of the Chebyshev polynomials $t_{n+1}$ is given by $\lambda_{n+1} = \cos \frac{\pi}{2n+2}$ (see \cite[(6.3.5)]{Szegö}). The normalized
associated polynomials $t_n(x,m)$, $m \geq 1$, correspond to the Chebyshev polynomials $u_n$ of the second kind given by (see \cite[p.~28-29]{Gautschi})
\[ u_n(x) =  \sqrt{\frac{2}{\pi}} \frac{\sin (n+1) t}{\sin t}, \quad n \geq 0.\]
The largest zero of the polynomials $u_{n+1}$ is given by $\lambda_{n+1} = \cos \frac{\pi}{n+2}$.
So, in the case of the Chebyshev polynomials of first kind, we get for the optimally space localized polynomials the formulas
\begin{align}
\mathcal{T}_n(x) &= \frac{\kappa_1}{\pi} \left( 1 + 2 \sum_{k=1}^n \cos\frac{k \pi }{2n+2} \cos kt\right) =
\frac{\kappa_1}{\pi} \frac{\cos (n+1)t \, \cos \frac{n\pi}{2n+2}}{\cos t -\cos\frac{\pi}{2n+2}}. \label{equation-optimalchebyshev}\\
\mathcal{T}_n^m (x)
&= \frac{2\kappa_2}{\pi } \left(\sum_{k=m}^n \frac{\sin\frac{(k-m+1) \pi }{n-m+2} }{\sin\frac{\pi }{n-m+2}}
 \cos kt \right) = \frac{2\kappa_2}{\pi}\frac{\cos (\frac{n-m+2 }{2} t) \cos (\frac{n+m }{2} t)}{ \cos t - \cos \frac{\pi }{n-m+2}}.
\label{equation-optimalchebyshevwavelet}
\end{align}
These optimal polynomials $\mathcal{T}_n$ and $\mathcal{T}_n^m(x)$ in combination with the Breitenberger uncertainty principle on the unit circle were intensively studied by Rauhut et al. in \cite{PrestinQuakRauhutSelig2003} and \cite{Rauhut2005}.
\end{Exa}

\section{Optimally space localized polynomials for Hermite expansions}

As an example of an orthogonal expansion in a non-compact setting we consider the Hermite polynomials on the real line. The aim of this section is to construct polynomials having an expansion in terms of the Hermite polynomials that are optimally localized at the point $x = 0$. In the following, we will see that most of the theory of the previous sections can be applied also in this case, although with slight modifications.

The Hilbert space under consideration is now $L^2(\Rr,w_H)$ with the weight function  $w_H(x) = e^{-x^2}$.
The corresponding orthonormal polynomials $(h_l)_{l=0}^\infty$, defining an orthonormal basis of $L^2(\Rr,w_H)$,
are called the (orthonormal) Hermite polynomials on $\Rr$.

As in Definition \eqref{definition-polynomialspacesJacobi}, we introduce
the polynomial spaces $\pia$, $\pib$ and $\pic$, and the corresponding unit spheres $\usa$, $\usb$ and $\usc$ for the Hermite polynomials $h_n$. The goal is, similar as in Section \ref{Section-optimallylocalizedpolynomials}, to find those polynomials from $\usa$, $\usb$ and $\usc$ that minimize the position variance
\begin{equation}
\var_S(f) := \int_{\Rr} x^2 |f(x)|^2 e^{-x^2} dx.
\end{equation}
Since in the setting of the Hermite polynomials the calculations will be more complex, we will omit the case
$P \in \pic$. Also, we will assume that $n$ and $m$ are even integers. The minimization problems then read as follows:
\begin{align}
 \Hha &= \arg\min_{P \in \usa} \var_S(P),  \label{optimalhermitea}\\
 \Hhb &= \arg\min_{P \in \usb} \var_S(P).  \label{optimalhermiteb}
\end{align}

To get the explicit solutions, we will make use of the Laguerre polynomials $p_l^{(\alpha)}$ that form an
orthonormal basis of the Hilbert space $L^2([0,\infty),w_{\alpha})$ with the weight function $w_{\alpha}(x) = x^{\alpha} e^{-x}$,
$\alpha > -1$. The orthonormal Laguerre polynomials $p_l^{(\alpha)}$ and the Hermite polynomials $h_l$ are correlated by the
following two formulas (see \cite[Section 4.6]{Ismail}):
\begin{align}
h_{2l}(x) &=  p_l^{(-1/2)}(x^2), & l = 0,1,2, \ldots, \label{equation-correlationhermiteLaguerreeven}\\
h_{2l+1}(x) &= x \, p_l^{(1/2)}(x^2), & l= 0,1,2, \ldots. \label{equation-correlationhermiteLaguerreodd}
\end{align}
The next Lemma gives a characterization of the position variance $\var_S(P)$ in terms of the expansions coefficients of a polynomial
$P$.

\begin{Lem} \label{Lemma-characterizationofvarS}
For a polynomial $P(x) = \ds \sum_{l=0}^n c_l h_l(x)$, we get the formulas
\begin{align*}
\var_S(P) &= \ts \ccc_e^H \Jj(-\frac{1}{2})_{\frac{n}{2}} \ccc_e + \ccc_o^H \Jj(\frac{1}{2})_{\frac{n}{2}-1} \ccc_o,  & \text{if} \quad P \in \pia, \\
\var_S(P) &= \ts \tilde{\ccc}_e^H \Jj(-\frac{1}{2})_{\frac{n}{2}}^{\frac{m}{2}} \tilde{\ccc}_e + \tilde{\ccc}_o^H \Jj(\frac{1}{2})_{\frac{n}{2}-1}^{\frac{m}{2}} \tilde{\ccc}_o, & \text{if} \quad P \in \pib,
\end{align*}
with the coefficient vectors
\begin{align*}
\ccc_e &= (c_0, c_2, \ldots, c_n)^T, & \ccc_o &= (c_1, c_3, \ldots, c_{n-1})^T, \\
\tilde{\ccc}_e &= (c_m, c_{m+2}, \ldots, c_n)^T, & \tilde{\ccc}_o &= (c_{m+1}, c_{m+3}, \ldots, c_{n-1})^T,
\end{align*}
and the matrices $\Jj(-\frac{1}{2})_{\frac{n}{2}}^{\frac{m}{2}}$ and $\Jj(\frac{1}{2})_{\frac{n}{2}-1}^{\frac{m}{2}}$ corresponding to the Jacobi matrices of the associated Laguerre polynomials $p_l^{(-\frac{1}{2})}(x,\frac{m}{2})$ and $p_l^{(\frac{1}{2})}(x,\frac{m}{2})$, respectively.
\end{Lem}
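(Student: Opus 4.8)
The plan is to reduce the computation of $\var_S(P)$ to the already-established Jacobi-matrix characterization of $\eps$ from Lemma \ref{Lemma-characterizationofepsP}, exploiting the parity splitting of the Hermite expansion and the Hermite--Laguerre correlation \eqref{equation-correlationhermiteLaguerreeven}--\eqref{equation-correlationhermiteLaguerreodd}. The first step is to split the polynomial $P(x) = \sum_{l=0}^n c_l h_l(x)$ into its even and odd parts, writing $P = P_e + P_o$ with $P_e(x) = \sum_{l \text{ even}} c_l h_l(x)$ and $P_o(x) = \sum_{l \text{ odd}} c_l h_l(x)$. Since $w_H(x) = e^{-x^2}$ is even and $x^2 |P|^2$ picks up no cross terms between the even and odd parts (the product of an even and an odd function integrates to zero against an even weight), the variance decouples as $\var_S(P) = \var_S(P_e) + \var_S(P_o)$. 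This is exactly the additive structure reflected in the two summands of the claimed formulas.

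Next I would treat each parity class separately by substituting $y = x^2$. Using \eqref{equation-correlationhermiteLaguerreeven}, the even part becomes $P_e(x) = \sum_{l} c_{2l}\, p_l^{(-1/2)}(x^2)$, and under $y = x^2$ the integral $\int_{\Rr} x^2 |P_e(x)|^2 e^{-x^2}\,dx$ transforms into a Laguerre-weighted integral on $[0,\infty)$ of the form $\int_0^\infty y \,\big|\sum_l c_{2l} p_l^{(-1/2)}(y)\big|^2\, w_{-1/2}(y)\,dy$, after accounting for the Jacobian $dx = \tfrac{1}{2} y^{-1/2} dy$ and the factor $x^2 = y$; here the weight $w_{-1/2}(y) = y^{-1/2} e^{-y}$ arises precisely from combining $e^{-x^2}$ with the Jacobian. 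This integral is nothing but $\eps$ in disguise for the Laguerre system with parameter $\alpha = -\tfrac12$, namely $\int_0^\infty y\, |Q(y)|^2 w_{-1/2}(y)\, dy$ with $Q = \sum_l c_{2l} p_l^{(-1/2)}$. I would then invoke the computation of Lemma \ref{Lemma-characterizationofepsP} (whose proof used only the three-term recurrence and orthonormality, hence applies verbatim to any orthonormal system with its own Jacobi matrix) to conclude that this equals $\ccc_e^H \Jj(-\tfrac12)_{n/2}\, \ccc_e$. The odd part is handled identically via \eqref{equation-correlationhermiteLaguerreodd}, where the extra factor $x$ combines with $x^2$ to give $x^2 \cdot x^2 |{\cdot}|^2 = y^2 |{\cdot}|^2$ relative to $p_l^{(1/2)}(x^2)$, producing the weight $w_{1/2}(y) = y^{1/2} e^{-y}$ and yielding $\ccc_o^H \Jj(\tfrac12)_{n/2-1}\, \ccc_o$.

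The $\pib$ case follows by simply restricting the parity sums to indices $l \geq m$, which truncates the Laguerre expansions and replaces the full Jacobi matrices by their shifted (associated) counterparts $\Jj(\pm\tfrac12)^{m/2}$, exactly as the associated polynomials $p_l^{(\alpha)}(y, m/2)$ correspond to the principal submatrices of the Laguerre Jacobi matrices. The main obstacle I anticipate is the bookkeeping around the half-integer Laguerre parameters and the precise indexing of the shifted Jacobi matrices --- in particular verifying that the change of variables $y = x^2$ sends $x^2$ to the correct power of $y$ in each parity class and that the induced weight is genuinely the orthogonality weight $w_{\pm 1/2}$, so that the Laguerre orthonormality can be invoked cleanly. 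Once the substitution is pinned down, the algebra is the same quadratic-form computation already carried out in Lemma \ref{Lemma-characterizationofepsP}, so no genuinely new estimate is needed.
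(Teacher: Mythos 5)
Your argument is correct, and it reaches the stated quadratic forms by a route that differs in execution from the paper's. The paper never leaves the real line: it substitutes the correlations \eqref{equation-correlationhermiteLaguerreeven}--\eqref{equation-correlationhermiteLaguerreodd} into one factor of $|P(x)|^2$, applies the Laguerre three-term recurrence directly to $x^2\,p_l^{(-1/2)}(x^2)$ and $x^2\,p_l^{(1/2)}(x^2)$ (i.e., the recurrence read in the variable $x^2$), rewrites the outcome again as Hermite polynomials $h_{2l+2},h_{2l},h_{2l-2}$ and $h_{2l+3},h_{2l+1},h_{2l-1}$, and then extracts the matrices from the orthonormality of the $h_l$ on $\Rr$; in particular, the even/odd cross terms vanish there by Hermite orthogonality rather than by your parity-symmetry argument. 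You instead decouple the parities first, substitute $y=x^2$, and identify each summand as the mean-value functional $\eps$ of a Laguerre system on $[0,\infty)$, with parameter $\alpha=-\tfrac12$ for the even part and $\alpha=\tfrac12$ for the odd part, so that the computation of Lemma \ref{Lemma-characterizationofepsP} can be cited; as you note, that proof uses only the three-term recurrence and orthonormality, so it is insensitive to the interval and the weight. Your reduction is more modular: it explains why the Laguerre Jacobi matrices appear, and the passage to $\pib$ is immediate, whereas the paper repeats the recurrence-plus-orthonormality algebra from scratch in the Hermite basis. The price is some bookkeeping that you should spell out: the factor $2$ from folding the even integrand over $\Rr$ onto $[0,\infty)$ cancels against the Jacobian $\tfrac12 y^{-1/2}\,dy$ so that the weights $w_{-1/2}(y)=y^{-1/2}e^{-y}$ and $w_{1/2}(y)=y^{1/2}e^{-y}$ come out exactly as claimed (for the odd part, the extra factor $x$ in \eqref{equation-correlationhermiteLaguerreodd} makes $x^2|P_o(x)|^2 = y^2\,\bigl|\sum_l c_{2l+1}p_l^{(1/2)}(y)\bigr|^2$, whence the shift from $y^{-1/2}$ to $y^{1/2}$); moreover, with the paper's normalizations the correlations \eqref{equation-correlationhermiteLaguerreeven}--\eqref{equation-correlationhermiteLaguerreodd} are exactly compatible with orthonormality of $p_l^{(\pm 1/2)}$ in $L^2([0,\infty),w_{\pm 1/2})$, which is what licenses invoking Laguerre orthonormality after the substitution. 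With those two points made explicit, your proof is complete and equivalent in content to the paper's.
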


\begin{proof}
Using the correlations \eqref{equation-correlationhermiteLaguerreeven} and \eqref{equation-correlationhermiteLaguerreodd}
between the Hermite and Laguerre polynomials as well as the three-term recurrence formulas (\ref{equation-recursionorthonormal}) of
the Laguerre polynomials $p_l^{(-\frac{1}{2})}(x)$ and $p_l^{(\frac{1}{2})}(x)$, we get for the polynomial $P(x) = \ds \sum_{l=0}^n c_l h_l(x)$ the formula
\begin{align*}
\var_S& (P) = \int_\Rr x^2 \Big|\sum_{l=0}^n c_l h_l(x)\Big|^2 e^{-x^2} dx \\ &= \int_\Rr \Big(\sum_{l=0}^{\frac{n}{2}} c_{2l} x^2  p_l^{(- \frac{1}{2})}(x^2)+ \sum_{l=0}^{\frac{n}{2}-1} c_{2l+1} x^3  p_l^{(\frac{1}{2})}(x^2)\Big)
\overline{\Big(\sum_{l=0}^n c_l h_l(x)\Big)} e^{-x^2} dx \\
&= \int_\Rr \Big(\sum_{l=0}^{\frac{n}{2}} c_{2l} \ts \big(b_{l+1}^{(-\frac{1}{2})} p_{l+1}^{(-\frac{1}{2})}(x^2)
+ a_l^{(-\frac{1}{2})} p_l^{(-\frac{1}{2})}(x^2)+ b_{l}^{(-\frac{1}{2})} p_{l-1}^{(- \frac{1}{2})}(x^2) \big) \\ & \quad + \sum_{l=0}^{\frac{n}{2}-1} c_{2l+1}  x \big( \ts b_{l+1}^{(\frac{1}{2})}  p_{l+1}^{(\frac{1}{2})}(x^2)
+ a_l^{(\frac{1}{2})} p_l^{(\frac{1}{2})}(x^2)+ b_{l}^{(\frac{1}{2})} p_{l-1}^{(\frac{1}{2})}(x^2) \big)\Big)
\overline{\Big(\sum_{l=0}^n c_l h_l(x)\Big)} e^{-x^2} dx  \\
&= \int_\Rr \Big(\sum_{l=0}^{\frac{n}{2}} c_{2l}\ts  \big(b_{l+1}^{(-\frac{1}{2})}  h_{2l+2}(x)
+ a_l^{(-\frac{1}{2})} h_{2l}(x) + b_{l}^{(-\frac{1}{2})} h_{2l-2}(x) \big) \\ & \quad + \sum_{l=0}^{\frac{n}{2}-1} c_{2l+1} \ts \big(b_{l+1}^{(\frac{1}{2})}  h_{2l+3}(x)
+ a_l^{(\frac{1}{2})} h_{2l+1}(x)+ b_{l}^{(\frac{1}{2})} h_{2l-1}(x) \big)\Big)
\overline{\Big(\sum_{l=0}^n c_l h_l(x)\Big)} e^{-x^2} dx.
\end{align*}
Next, using the orthonormality relations of the Hermite polynomials $h_l$, we can conclude for $P \in \pia$
\begin{align*}
\var_S(P) &= \ccc_e^H \ts \Jj(-\frac{1}{2})_{\frac{n}{2}} \ccc_e + \ccc_o^H \Jj(\frac{1}{2})_{\frac{n}{2}-1} \ccc_o.
\end{align*}
If $c_0 = \ldots = c_{m-1} = 0$, we get the assertion for the polynomials $P$ in $\pib$.
\end{proof}

The solutions of the optimization problems \eqref{optimalhermitea} and \eqref{optimalhermiteb} now read as follows.

\begin{Thm} \label{Theorem-optimalhermite}
The polynomials solving the minimization problems \eqref{optimalhermitea} and \eqref{optimalhermiteb} are given by
\begin{align}
\Hha (x) &= \kappa_1 \sum_{l=0}^{\frac{n}{2}} p_l^{(-\frac{1}{2})}(\lambda_{\frac{n}{2}+1})\, h_{2l}(x),
\displaybreak[0] \label{equation-optimalpolynomialhermitea}\\
\Hhb (x) &= \kappa_2 \sum_{l=\frac{m}{2}}^\frac{n}{2} \ts p_{l-\frac{m}{2}}^{(-\frac{1}{2})}(\lambda_{\frac{n-m}{2}+1}^{\frac{m}{2}},\frac{m}{2})\, h_{2l}(x),
\displaybreak[0] \label{equation-optimalpolynomialhermiteb}
\end{align}
where $p_l^{(-\frac{1}{2})}(x,\frac{m}{2})$ denote the associated Laguerre polynomials with parameter $\alpha = -\frac{1}{2}$.
The values $\lambda_{\frac{n}{2}+1}$ and $\lambda_{\frac{n-m}{2}+1}^{\frac{m}{2}}$
denote the smallest zero of the polynomials
$p_{\frac{n}{2}+1}^{(-\frac{1}{2})}(x)$ and $p_{\frac{n-m}{2}+1}^{(-\frac{1}{2})}(x,\frac{m}{2})$, respectively. The constants $\kappa_1$ and $\kappa_2$ are chosen such that the optimal polynomials lie in the respective unit sphere and are uniquely determined up to multiplication with a complex scalar of absolute value one.
The minimal value of $\var_S(P)$ in the respective polynomial spaces is given by
\begin{align*}
M_{n}  &:= \min_{P\in \usa}\var_S(P) = \lambda_{\frac{n}{2}+1},\\
M_{n}^m  &:= \min_{P\in \usb}\var_S(P) = \lambda_{\frac{n-m}{2}+1}^{\frac{m}{2}}.
\end{align*}
\end{Thm}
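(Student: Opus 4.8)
The plan is to reduce the two Hermite minimization problems to the Jacobi-type eigenvalue analysis already carried out in Theorem \ref{Theorem-optimalpolynomial}, using the characterization of $\var_S(P)$ supplied by Lemma \ref{Lemma-characterizationofvarS}. First I would exploit the fact that $n$ and $m$ are even together with the even/odd splitting $\var_S(P) = \ccc_e^H \Jj(-\frac{1}{2})_{n/2} \ccc_e + \ccc_o^H \Jj(\frac{1}{2})_{n/2-1} \ccc_o$ (respectively its $\pib$-version). Since the two quadratic forms act on disjoint sets of coefficients and the normalization constraint $\|P\|_w^2 = |\ccc_e|^2 + |\ccc_o|^2 = 1$ decouples accordingly, minimizing $\var_S(P)$ amounts to placing all the mass on whichever block admits the smaller minimal Rayleigh quotient. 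Because both $\Jj(-\frac{1}{2})$ and $\Jj(\frac{1}{2})$ are symmetric positive-definite Jacobi matrices, their Rayleigh quotients are bounded below by their respective smallest eigenvalues; I would then argue that the parameter $\alpha = -\frac{1}{2}$ block yields the strictly smaller smallest eigenvalue, so the optimal polynomial is supported entirely on the even-indexed Hermite coefficients $\ccc_e$ (hence $\ccc_o = 0$), which is exactly the form of $\Hha$ and $\Hhb$ in \eqref{equation-optimalpolynomialhermitea} and \eqref{equation-optimalpolynomialhermiteb}.

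With the problem collapsed to a single symmetric Jacobi matrix, the remaining work is precisely the argument of Theorem \ref{Theorem-optimalpolynomial}, only with a \emph{minimization} in place of a maximization. For the full space $\pia$ I would invoke the Rayleigh quotient bound $\ccc_e^H \Jj(-\frac{1}{2})_{n/2} \ccc_e \geq \lambda_{\min} |\ccc_e|^2$, with equality on the corresponding eigenvector; the smallest eigenvalue of the Laguerre-type Jacobi matrix $\Jj(-\frac{1}{2})_{n/2}$ equals the smallest zero $\lambda_{n/2+1}$ of the orthonormal Laguerre polynomial $p_{n/2+1}^{(-1/2)}$ (this is the Laguerre instance of \cite[Theorem 1.31]{Gautschi}). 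Feeding $\lambda_{n/2+1}$ back through the three-term recurrence \eqref{equation-recursionorthonormal} for the $p_l^{(-1/2)}$ with leading entry normalized to $1$ gives the eigenvector components $c_{2l} = p_l^{(-1/2)}(\lambda_{n/2+1})$, and $\kappa_1$ supplies the normalization. The case $\pib$ is the same argument run on the truncated matrix $\Jj(-\frac{1}{2})_{n/2}^{m/2}$, whose smallest eigenvalue is the smallest zero of the \emph{associated} Laguerre polynomial $p_{(n-m)/2+1}^{(-1/2)}(x,\frac{m}{2})$, yielding the associated-polynomial coefficients $c_{2l} = p_{l-m/2}^{(-1/2)}(\lambda_{(n-m)/2+1}^{m/2},\frac{m}{2})$. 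Uniqueness up to a unimodular scalar follows as before from simplicity of the smallest zero (\cite[Theorem 5.3]{Chihara}), and the stated minimal values $M_n$, $M_n^m$ read off directly from the Rayleigh bounds.

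The main obstacle I anticipate is justifying the decoupling step rigorously, namely that the minimizer has no odd component. It is tempting to assume it, but one must actually compare the smallest eigenvalues of $\Jj(-\frac{1}{2})$ and $\Jj(\frac{1}{2})$: since $\var_S$ is a strictly convex combination of the two block Rayleigh quotients under the single constraint $|\ccc_e|^2 + |\ccc_o|^2 = 1$, the infimum is attained by concentrating all weight on the block with the smaller floor, and a strict inequality between the two smallest eigenvalues is needed to force $\ccc_o = 0$ rather than admit a degenerate family of minimizers. I would establish this by noting that the smallest zeros of the Laguerre polynomials $p_k^{(-1/2)}$ lie strictly below those of $p_k^{(1/2)}$ of comparable degree, which is a consequence of the monotonicity of Laguerre zeros in the parameter $\alpha$ (the zeros increase with $\alpha$); this both locates the even block as the optimal one and secures uniqueness. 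Everything else is a routine transcription of the proof of Theorem \ref{Theorem-optimalpolynomial} into the Laguerre setting.
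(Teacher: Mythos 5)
Your proposal is correct and takes essentially the same route as the paper: the block decomposition from Lemma~\ref{Lemma-characterizationofvarS}, reduction to the even block by comparing the smallest eigenvalues of the two Jacobi matrices, and then the Rayleigh-quotient/eigenvector argument of Theorem~\ref{Theorem-optimalpolynomial} rerun as a minimization, with uniqueness from simplicity of the extremal zero. The one place the paper is more careful at exactly the obstacle you flagged: since the even block is one dimension larger than the odd one, it combines the $\alpha$-monotonicity of the smallest zero---which for the \emph{associated} Laguerre polynomials needed in the band-limited case is not a classical fact but is established via a Hellmann--Feynman-type argument---with the interlacing property of zeros of consecutive degrees to get the required strict inequality.
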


\begin{proof}
In the following, we will determine the optimal solution $\Hhb$ for
the minimization problem \eqref{optimalhermiteb}. The formula for
the optimal polynomial $\Hha$ follows as a special case when $m=0$. By Lemma \ref{Lemma-characterizationofvarS}, the
position variance $\var_S(P)$ of a polynomial $P(x) = \sum_{l=m}^n c_l h_l(x)$
can be written as $\var_S(P) = \ts \tilde{\ccc}_e^H \Jj(-\frac{1}{2})_{\frac{n}{2}}^{\frac{m}{2}} \tilde{\ccc}_e + \tilde{\ccc}_o^H \Jj(\frac{1}{2})_{\frac{n}{2}-1}^{\frac{m}{2}} \tilde{\ccc}_o$ with the coefficient vectors $\tilde{\ccc}_e$ and $\tilde{\ccc}_e$ given in Lemma \ref{Lemma-characterizationofvarS}.
Hence, minimizing $\var_S(P)$ with respect to a normed polynomial $P \in \usb$
is equivalent to minimize the quadratic functional
\begin{equation} \ts \tilde{\ccc}_e^H \Jj(-\frac{1}{2})_{\frac{n}{2}}^{\frac{m}{2}} \tilde{\ccc}_e + \tilde{\ccc}_o^H \Jj(\frac{1}{2})_{\frac{n}{2}-1}^{\frac{m}{2}} \tilde{\ccc}_o \quad \text{subject to} \quad
|\tilde{\ccc}_e|^2+|\tilde{\ccc}_o|^2 = c_m^2 + c_{m+1}^2 + \cdots + c_n^2 = 1. \end{equation} \label{equation-minimizationproblemcoefficients}
The minimization problem \eqref{equation-minimizationproblemcoefficients} has a block matrix structure with the two
matrices $\Jj(-\frac{1}{2})_{\frac{n}{2}}^{\frac{m}{2}}$ and $\Jj(\frac{1}{2})_{\frac{n}{2}-1}^{\frac{m}{2}}$. To solve the problem, we have to determine the smallest eigenvalue of both matrices. The eigenvalues of the two matrices $\Jj(-\frac{1}{2})_{\frac{n}{2}}^{\frac{m}{2}}$ and $\Jj(\frac{1}{2})_{\frac{n}{2}-1}^{\frac{m}{2}}$ correspond exactly with the zeros of the Laguerre polynomials $p_{\frac{n-m}{2}+1}^{(-\frac{1}{2})}(x,\frac{m}{2})$ and $p_{\frac{n-m}{2}}^{(\frac{1}{2})}(x,\frac{m}{2})$. By a functional analytic method based on the Hellmann-Feynman Theorem (see \cite{Ismail1987}, \cite[Section 7.4]{Ismail} and \cite{ErbTookos2010}) it follows that the smallest zero of the associated Laguerre polynomials $p_l^{(\alpha)}(x,m)$ is an increasing function of the parameter $\alpha$. This result together with the fact that the smallest eigenvalue of $p_{\frac{n-m}{2}+1}^{(-\frac{1}{2})}(x,\frac{m}{2})$ is strictly smaller than the one of $p_{\frac{n-m}{2}}^{(-\frac{1}{2})}(x,\frac{m}{2})$ (see the interlacing property of the orthogonal polynomials \cite[Theorem 3.3.2]{Szegö}) implies that the matrix $\Jj(-\frac{1}{2})_{\frac{n}{2}}^{\frac{m}{2}}$ is the one with the smallest eigenvalue.

Therefore, if $\lambda_{\frac{n-m}{2}+1}^{\frac{m}{2}}$ denotes the smallest eigenvalue of the Jacobi
matrix $\Jj(-\frac{1}{2})_{\frac{n}{2}}^{\frac{m}{2}}$, we get
\begin{equation} \label{equation-extremalmatrixHermite}
\tilde{\ccc}_e^H \Jj(-\frac{1}{2})_{\frac{n}{2}}^{\frac{m}{2}} \tilde{\ccc}_e + \tilde{\ccc}_o^H \Jj(\frac{1}{2})_{\frac{n}{2}-1}^{\frac{m}{2}} \tilde{\ccc}_o \geq \lambda_{\frac{n-m}{2}+1}^{\frac{m}{2}} (|\tilde{\ccc}_e|^2+ |\tilde{\ccc}_o|^2)
\end{equation}
and equality is attained for the eigenvectors corresponding to $\lambda_{\frac{n-m}{2}+1}^{\frac{m}{2}}$.
Using the recursion formula (\ref{equation-recursionassociatedsymmetric}) of the associated polynomials $p_{l}^{(-\frac{1}{2})}(x,\frac{m}{2})$ with
$c_m = 1$ the eigenvalue equation $\Jj(-\frac{1}{2})_{\frac{n}{2}}^{\frac{m}{2}} \tilde{\ccc}_e = \lambda_{{\frac{n-m}{2}}+1}^{{\frac{m}{2}}} \tilde{\ccc_e}$ yields
\begin{align*}
c_{2l} &= \ts p_{l-\frac{m}{2}}(\lambda_{\frac{n-m}{2}+1}^{\frac{m}{2}},\frac{m}{2}), & \ts l = \frac{m}{2}, \ldots, \frac{n}{2},\\
c_{2l+1} &= 0, & \ts l = \frac{m}{2}, \ldots, \frac{n}{2}-1.
\end{align*}
Finally, we have to normalize the coefficients
$c_l$, $m \leq l \leq n$, such that $|\tilde{\ccc}_e|^2+|\tilde{\ccc}_o|^2 =1$.
This is done by the absolute value of the constant $\kappa_2$.
The uniqueness (up to a complex scalar with absolute value $1$) of the optimal polynomial $\Ppa$
follows from the fact that the smallest zero
of the polynomial $p_{\frac{n-m}{2}+1}^{(-\frac{1}{2})}(x,\frac{m}{2})$ is simple (see \cite[Theorem 5.3]{Chihara}).
The formula for $M_{n}^m$ follows directly from the estimate in (\ref{equation-extremalmatrixHermite}).
\end{proof}

Using once again the relation \eqref{equation-correlationhermiteLaguerreeven} between the even Hermite polynomials and
the Laguerre polynomials and the Christoffel-Darboux type formulas ot Lemma \ref{Lemma-ChristoffelDarbouxassociated}, we
get the following explicit formulas for the optimal polynomials in Theorem \ref{Theorem-optimalhermite}:

\begin{Cor} \label{Corollary-explicitformoptimalhermite}
The optimal polynomials $\Hha$ and $\Hhb$ in Theorem \ref{Theorem-optimalhermite} have the following explicit form:
\begin{align*}
\Hha(x) &= \kappa_1 b_{n+1}^{(-\frac{1}{2})} p_{\frac{n}{2}}^{(-\frac{1}{2})}(\lambda_{\frac{n}{2}+1}) \frac{h_{n+1}(x)}{x^2 - \lambda_{\frac{n}{2}+1}},
\displaybreak[0]\\
\Hhb(x) &= \kappa_2  \frac{ b_{n+1}^{(-\frac{1}{2})} p_{\frac{n-m}{2}}^{(-\frac{1}{2})}(\lambda_{\frac{n-m}{2}+1}^{\frac{m}{2}},\frac{m}{2}) h_{n+1}(x)
+ b_m^{(-\frac{1}{2})} h_{m-1}(x)}{x^2 -\lambda_{\frac{n-m}{2}+1}^{\frac{m}{2}}}, \displaybreak[0]
\end{align*}
where the constants $\kappa_1$, $\kappa_2$ and the roots $\lambda_{\frac{n}{2}+1}$, $\lambda_{{\frac{n-m}{2}}+1}^{{\frac{m}{2}}}$ are given as in Theorem \ref{Theorem-optimalhermite}.
\end{Cor}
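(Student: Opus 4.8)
The plan is to collapse the finite sums in the series representations \eqref{equation-optimalpolynomialhermitea} and \eqref{equation-optimalpolynomialhermiteb} of Theorem \ref{Theorem-optimalhermite} by means of the Christoffel-Darboux type identity \eqref{equation-ChristoffelDarbouxassociated} of Lemma \ref{Lemma-ChristoffelDarbouxassociated}. The key observation is that both optimal polynomials in Theorem \ref{Theorem-optimalhermite} are expanded purely in the \emph{even} Hermite polynomials $h_{2l}$, so that by the correlation \eqref{equation-correlationhermiteLaguerreeven}, $h_{2l}(x) = p_l^{(-\frac{1}{2})}(x^2)$, every term can be rewritten through the orthonormal Laguerre polynomials $p_l^{(-\frac{1}{2})}$ evaluated at $x^2$. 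The sums thereby turn into Christoffel-Darboux kernels of the Laguerre system $\{p_l^{(-\frac{1}{2})}\}$, and only the first formula \eqref{equation-ChristoffelDarbouxassociated} of Lemma \ref{Lemma-ChristoffelDarbouxassociated} is needed; the scaled co-recursive version \eqref{equation-ChristoffelDarbouxscaledassociated} and the odd Laguerre block $\{p_l^{(\frac{1}{2})}\}$ play no role, since the odd coefficients vanish in Theorem \ref{Theorem-optimalhermite}.

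First I would dispatch $\Hha$, the case $m=0$. Substituting $h_{2l}(x) = p_l^{(-\frac{1}{2})}(x^2)$ into \eqref{equation-optimalpolynomialhermitea} turns the defining sum into $\kappa_1 \sum_{l=0}^{\frac{n}{2}} p_l^{(-\frac{1}{2})}(\lambda_{\frac{n}{2}+1})\, p_l^{(-\frac{1}{2})}(x^2)$, which is exactly the left-hand side of the classical Christoffel-Darboux formula (the $m=0$ instance of \eqref{equation-ChristoffelDarbouxassociated}) for the orthonormal Laguerre system, evaluated at the pair $(x^2,\lambda_{\frac{n}{2}+1})$. Since $\lambda_{\frac{n}{2}+1}$ is by definition the smallest zero of $p_{\frac{n}{2}+1}^{(-\frac{1}{2})}$, the numerator term carrying the factor $p_{\frac{n}{2}+1}^{(-\frac{1}{2})}(\lambda_{\frac{n}{2}+1})$ vanishes, and the sum collapses to the single quotient $b_{\frac{n}{2}+1}^{(-\frac{1}{2})}\, p_{\frac{n}{2}}^{(-\frac{1}{2})}(\lambda_{\frac{n}{2}+1})\, p_{\frac{n}{2}+1}^{(-\frac{1}{2})}(x^2)/(x^2 - \lambda_{\frac{n}{2}+1})$. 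Reading the surviving even Laguerre polynomial $p_{\frac{n}{2}+1}^{(-\frac{1}{2})}(x^2)$ back as a Hermite polynomial via \eqref{equation-correlationhermiteLaguerreeven} yields the stated closed form for $\Hha$.

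The polynomial $\Hhb$ is handled in the same spirit, now genuinely invoking the associated polynomials. After the substitution its sum reads $\kappa_2 \sum_{l=\frac{m}{2}}^{\frac{n}{2}} p_{l-\frac{m}{2}}^{(-\frac{1}{2})}(\lambda_{\frac{n-m}{2}+1}^{\frac{m}{2}},\frac{m}{2})\, p_l^{(-\frac{1}{2})}(x^2)$, matching \eqref{equation-ChristoffelDarbouxassociated} with shift parameter $\frac{m}{2}$, upper index $\frac{n}{2}$, and arguments $x \mapsto x^2$, $y \mapsto \lambda_{\frac{n-m}{2}+1}^{\frac{m}{2}}$. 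Because $\frac{n-m}{2}+1 = \frac{n}{2}-\frac{m}{2}+1$, the value $\lambda_{\frac{n-m}{2}+1}^{\frac{m}{2}}$ is a zero of the associated polynomial $p_{\frac{n}{2}-\frac{m}{2}+1}^{(-\frac{1}{2})}(\,\cdot\,,\frac{m}{2})$, so that term again drops out of the numerator. What survives is the main quotient $b_{\frac{n}{2}+1}^{(-\frac{1}{2})}\, p_{\frac{n-m}{2}}^{(-\frac{1}{2})}(\lambda_{\frac{n-m}{2}+1}^{\frac{m}{2}},\frac{m}{2})\, p_{\frac{n}{2}+1}^{(-\frac{1}{2})}(x^2)/(x^2 - \lambda_{\frac{n-m}{2}+1}^{\frac{m}{2}})$ together with the boundary correction $b_{\frac{m}{2}}^{(-\frac{1}{2})}\, p_{\frac{m}{2}-1}^{(-\frac{1}{2})}(x^2)/(x^2 - \lambda_{\frac{n-m}{2}+1}^{\frac{m}{2}})$ coming from the last term of \eqref{equation-ChristoffelDarbouxassociated}. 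Converting both even Laguerre polynomials back to Hermite polynomials via \eqref{equation-correlationhermiteLaguerreeven} gives the two-term numerator of the asserted formula for $\Hhb$.

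The computation is otherwise routine once the identity is applied; the one place demanding genuine care is the simultaneous bookkeeping of three index operations: the halving of degrees ($n \mapsto \frac{n}{2}$, $m \mapsto \frac{m}{2}$) inherited from Theorem \ref{Theorem-optimalhermite}, the substitution $x \mapsto x^2$ that fixes the denominator as $x^2 - \lambda$ and restores the numerator to the Hermite degree scale, and the passage between the Laguerre index $l$ and the Hermite index $2l$ under \eqref{equation-correlationhermiteLaguerreeven}. Aligning these correctly is exactly what determines which even Hermite polynomial appears in each numerator and which recurrence coefficient $b^{(-\frac{1}{2})}$ multiplies it, so this is the step I would verify most carefully.
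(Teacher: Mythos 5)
Your route is exactly the paper's: the paper justifies this corollary in a single sentence by invoking the correlation \eqref{equation-correlationhermiteLaguerreeven} and the Christoffel--Darboux formula \eqref{equation-ChristoffelDarbouxassociated} for the even Laguerre block, and your write-up carries out precisely that computation, correctly observing that the odd block and the co-recursive version \eqref{equation-ChristoffelDarbouxscaledassociated} are not needed.

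However, your last step --- the assertion that the collapsed sum ``yields the stated closed form'' --- is false as written, and the index bookkeeping you yourself singled out as the delicate point is exactly where the mismatch sits. Your (correct) computation gives
$\Hha(x) = \kappa_1\, b_{\frac{n}{2}+1}^{(-\frac{1}{2})}\, p_{\frac{n}{2}}^{(-\frac{1}{2})}(\lambda_{\frac{n}{2}+1})\, h_{n+2}(x)/(x^2-\lambda_{\frac{n}{2}+1})$,
since $p_{\frac{n}{2}+1}^{(-\frac{1}{2})}(x^2)=h_{n+2}(x)$, and correspondingly the coefficients $b_{\frac{n}{2}+1}^{(-\frac{1}{2})}$, $b_{\frac{m}{2}}^{(-\frac{1}{2})}$ and the polynomials $h_{n+2}(x)$, $h_{m-2}(x)$ in the numerator for $\Hhb$. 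The corollary as printed instead has $b_{n+1}^{(-\frac{1}{2})}$, $b_m^{(-\frac{1}{2})}$, $h_{n+1}(x)$ and $h_{m-1}(x)$. The printed indices cannot be correct: $n$ is even, so $h_{n+1}$ is odd, $h_{n+1}(x)=x\,p_{\frac{n}{2}}^{(\frac{1}{2})}(x^2)$, and $x^2-\lambda_{\frac{n}{2}+1}$ does not divide it, because $\lambda_{\frac{n}{2}+1}$ is a zero of $p_{\frac{n}{2}+1}^{(-\frac{1}{2})}$ and (by the monotonicity-in-$\alpha$ argument used in the proof of Theorem \ref{Theorem-optimalhermite}) not a zero of $p_{\frac{n}{2}}^{(\frac{1}{2})}$; hence the printed right-hand side is not even a polynomial. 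The statement evidently carries typos copied from the pattern of the Jacobi-case Corollary \ref{Corollary-explicitformoptimalpolynomials}. Your derivation is the correct one, but a careful proof must flag this discrepancy and state the corrected indices rather than claim agreement with the formula as printed.
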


\section{Construction of polynomial filters for the detection of peaks in periodic signals}

In this final section, we will give some examples on how the optimal polynomials of Section \ref{Section-optimalpolynomials} can be applied as filters for the detection of peaks. To this end, we consider continuous $2\pi$-periodic signal functions $f \in L^2([-\pi,\pi))$ and trigonometric polynomial filters $h \in \Pi_n$.

Our goal is to find trigonometric polynomials $h$ which are well suited to work out the peaks of the signal $f$. Since the
filtering operator $F_h$ defined in \eqref{equation-filteroperator} acts as a convolution operator on $f$, most of the mass of the polynomial $h$ has to be concentrated at the point $t = 0$ in order to filter out the peaks of the signal $f$. If we further assume that  $h$ is even, i.e., $h(t) = h(-t)$, then $h(\arccos(x))$ is defined on the interval $[-1,1]$ and is a polynomial of degree $n$ in the variable $x = \cos t$. Moreover, if the polynomial $h(\arccos(x))$ is localized at $x = 1$, then the trigonometric polynomial $h$ is localized at $t = 0$. Therefore, the optimally space localized polynomials $\Ppa(\cos t)$ of Theorem \ref{Theorem-optimalpolynomial} are natural choices for polynomial filters in peak analysis.

Experimenting with different weight functions $w$ in Theorem \ref{Theorem-optimalpolynomial} and Corollary \ref{Corollary-explicitformoptimalpolynomials}, it is possible to construct a whole bunch of well-localized polynomial filters with different properties. We give here just some easy examples using the Jacobi weight function $w_{\alpha\beta}$.
For $\alpha = -\frac{1}{2}$, $\beta = -\frac{1}{2}$, and $\alpha = \frac{1}{2}$, $\beta = -\frac{1}{2}$, we get the two filter kernels
\begin{align} \label{equation-optimalfilter1}
h_n^{(1)}(t) &:= \mathcal{T}_n(\cos t ) =  C_n^{(1)} \frac{\cos (n+1)t}{\cos t -\cos \frac{\pi}{2n+2}}, \\
h_n^{(2)}(t) &:=  C_n^{(2)} \frac{\sin (n+\frac{3}{2})t}{\sin \frac{t}{2}} \frac{1}{\cos t -\cos \frac{\pi}{n+\frac{3}{2}}}, \label{equation-optimalfilter2}
\end{align}
where the constants $C_n^{(1)}$ and $C_n^{(2)}$ denote normalizing factors such that the respective polynomials are normed in the $L^2$-norm. The optimal polynomial $h_n^{(1)}(t)$ was already computed in Example \ref{example-optimallocalizedchebyshev}. In approximation theory, the trigonometric polynomials $h_n^{(1)}$ are known as Rogosinski kernels (cf. \cite[p. 112-114]{Lasser}), in signal analysis they are well-known as cosine windows (cf. \cite{Harris1978}).

\begin{figure}[h]
  \begin{minipage}{0.5\textwidth}
   \centering
  The Rogosinski filter $h^{(1)}_6$. \\
  \includegraphics[width=\textwidth]{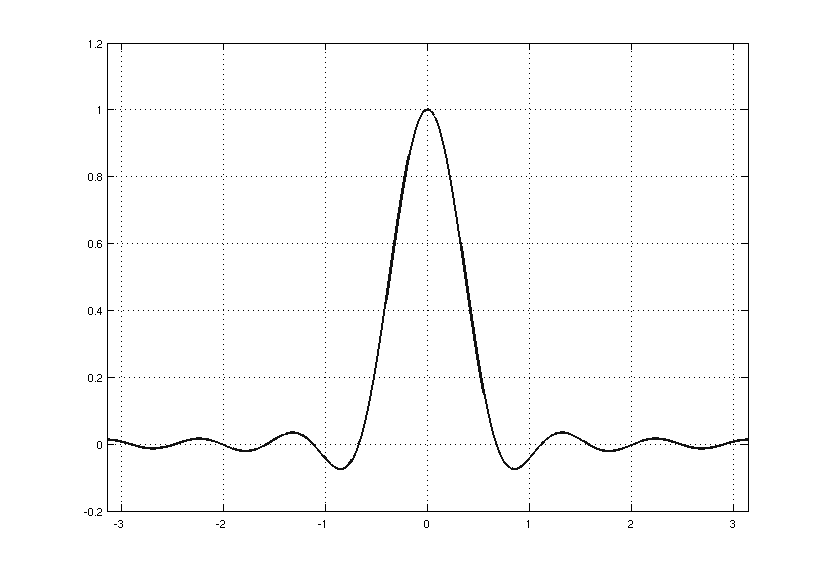}
  \end{minipage}\hfill
  \begin{minipage}{0.5\textwidth}
  \centering
  The polynomial filter $h^{(2)}_6$ \\
  \includegraphics[width=\textwidth]{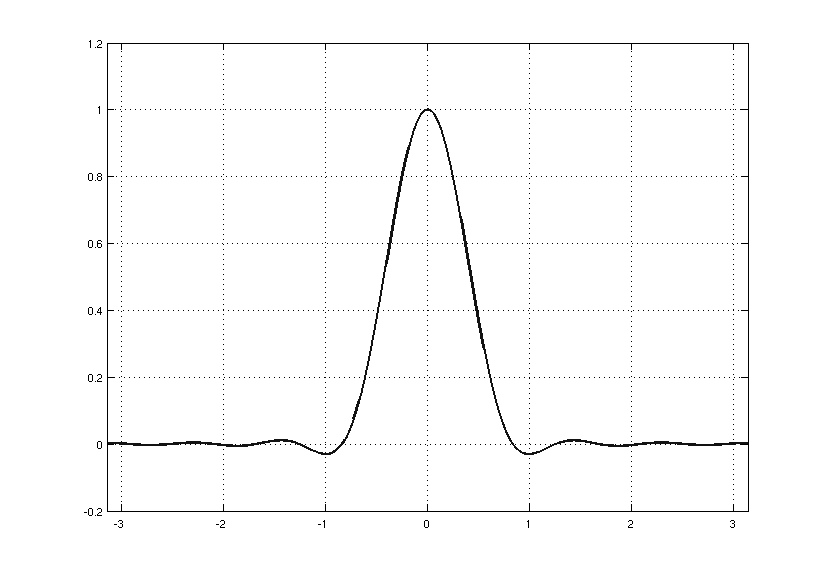}\\
  \end{minipage}
\caption{The polynomial filters $h^{(1)}_6$ and $h^{(2)}_6$ of degree $6$.}
\label{Figure-optimalcosine}
\end{figure}

The filter $h_n^{(2)}(t)$ is computed in the same way as $h_n^{(1)}(t)$ using the explicit representation of the Chebyshev polynomials of
third kind (for the definition, see \cite[Section 1.5.1]{Gautschi}). Figure \ref{Figure-optimalcosine} illustrates that compared to the filter $h_n^{(1)}$ the trigonometric polynomial $h_n^{(2)}$ has a wider peak at $t = 0$ but less mass at the ends $t = \pi$ and $t = -\pi$. This is due to the fact that in the case of the filter $h_n^{(2)}$ we optimize over all polynomials $P \in \Pi_n$ with
\[ \int_{-1}^1 |P(x)|^2 (1-x)^{\frac{1}{2}}(1+x)^{-\frac{1}{2}} dx = 1, \]
i.e. the particular optimization problem favours polynomials that have more mass concentrated at $x = 1$.

\begin{figure}[ht]
\centering
\includegraphics[width=\textwidth]{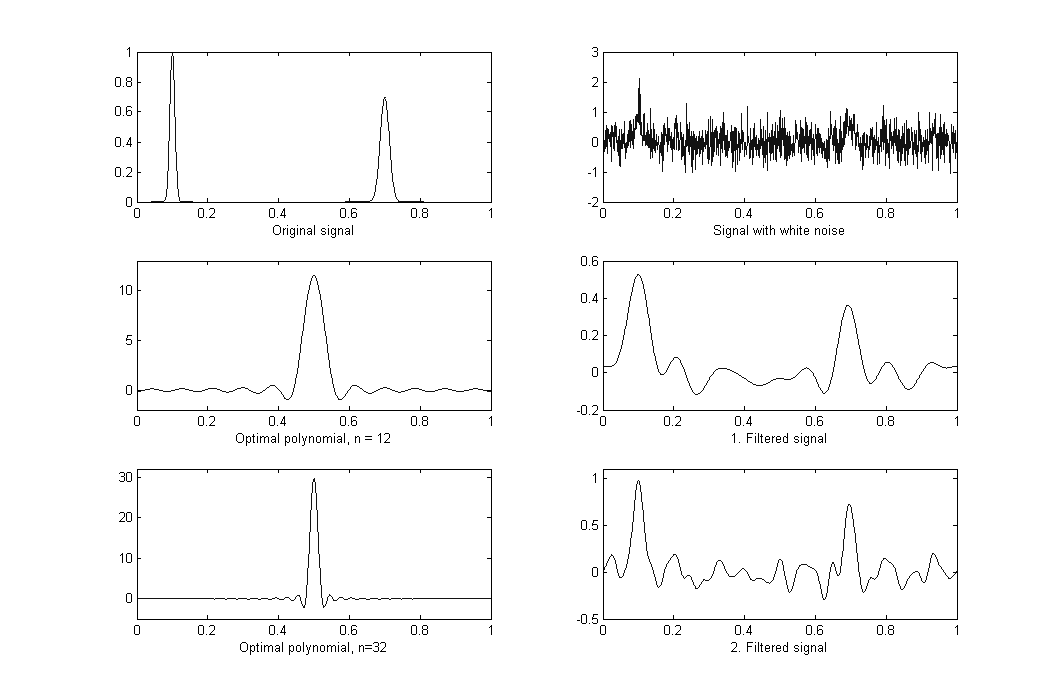}\\
\caption{Filtering a noisy signal with the optimal polynomial filter $h_n^{(1)}$.}
\label{Figure-peakdetectionOP}
\end{figure}

If the peaks of the signal $f$ lie on a low-frequency carrier signal or if some baseline correction has to be done, it is
reasonable to additionally filter out the low frequencies of $f$. In this case, the polynomial filter $h$ has to be restricted to a frequency band $[m,n] \subset \Nn$. The corresponding optimal filters are given in equation \eqref{equation-optimalpolynomialJacobib}.
In case of the Chebyshev polynomials $t_l(x)$ of first kind, the optimal band-limited polynomials $\mathcal{T}_n^{m}$ are given in Example \ref{example-optimallocalizedchebyshev} as
\begin{equation}
h_{n,m}^{(1)}(t) := \mathcal{T}_{n}^{m}(\cos t) = C_{n,m}^{(1)} \frac{\cos (\frac{n-m+2 }{2} t) \cos (\frac{n+m }{2} t)}{ \cos t - \cos \frac{\pi }{n-m+2}}.
\end{equation}

\begin{figure}[ht]
\centering
\includegraphics[width=\textwidth]{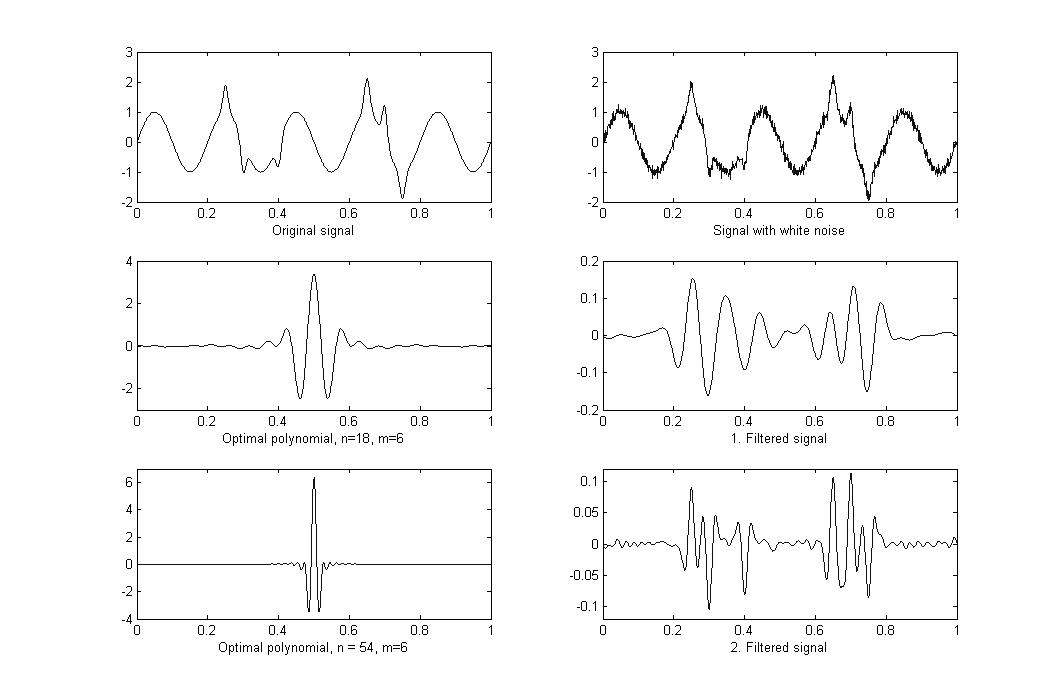}\\
\caption{Filtering a noisy signal with the optimal wavelet filter $h_{n,m}^{(2)}$.}
\label{Figure-peakdetectionOPw}
\end{figure}

Finally, we consider polynomial window functions $h$ with the additional property $h(\pi) = h(-\pi) = 0$. If $h$ is assumed to be a symmetric trigonometric polynomial, then $h$ has the form $h(t) = g(t) (1+\cos t)$,
where $g$ is a symmetric trigonometric polynomial of degree $n-1$. Now, as above, we can play the same game with the polynomial $g$,
i.e., we can insert optimally space localized polynomials as candidates for $g$. Using Jacobi weights with $\alpha = -\frac{1}{2}$,
$\beta = \frac{3}{2}$ and $\alpha = \frac{1}{2}$, $\beta = \frac{1}{2}$, we can introduce the filter kernels
\begin{align} \label{equation-optimalfilter3}
h_n^{(3)}(t) &:=  C_n^{(3)} \frac{p_{n}^{(-\frac{1}{2},\frac{3}{2})}(\cos t) (1+\cos t) }{\cos t -\lambda_{n}}, \\
h_n^{(4)}(t) &:=  C_n^{(4)} \frac{\sin (n+1)t}{\sin t} \frac{1+\cos t}{\cos t -\cos \frac{\pi}{n+1}}, \label{equation-optimalfilter4}
\end{align}
where $\lambda_n$ denotes the largest zero of the Jacobi polynomial $p_{n}^{(-\frac{1}{2},\frac{3}{2})}$.
The filter $h_n^{(3)}$ is defined such that the functional $\int_{-1}^1 x |P(x)|^2 (1+x)^2 dx$ is maximized over all polynomials $P$
under the constraint $\int_{-1}^1 |P(x)|^2 (1+x)^2 dx = 1$. The filter polynomial $h_n^{(4)}$ is well-known in signal analysis under
the name Hann window (see \cite{Harris1978}). In fact,
computing the Fourier coefficients of $h_n^{(4)}$, one gets $\hat{h}_n^{(4)}(j) = c \cos^2(\frac{\pi j}{2n+2})$ for $j = -n, \ldots,n$.

\begin{figure}[h]
  \begin{minipage}{0.5\textwidth}
   \centering
  The polynomial window $h^{(3)}_6$. \\
  \includegraphics[width=\textwidth]{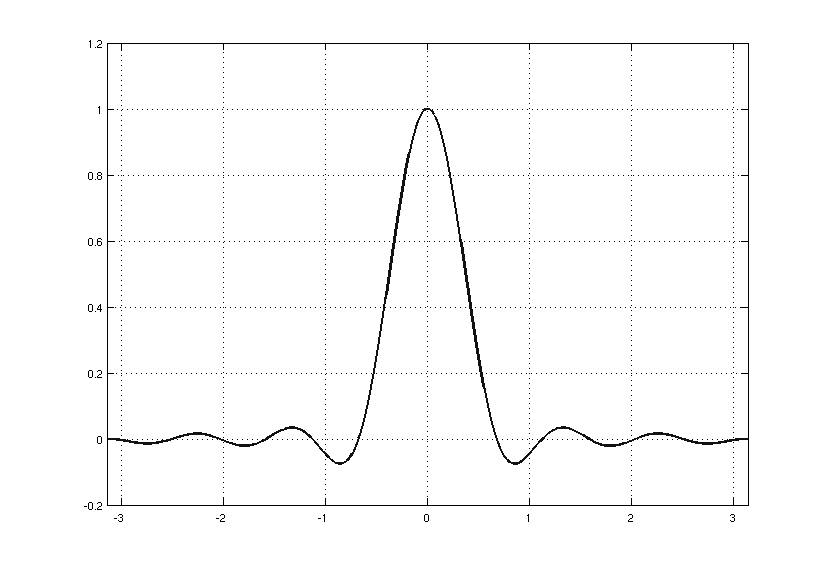}
  \end{minipage}\hfill
  \begin{minipage}{0.5\textwidth}
  \centering
  The Hann window $h^{(4)}_6$ \\
  \includegraphics[width=\textwidth]{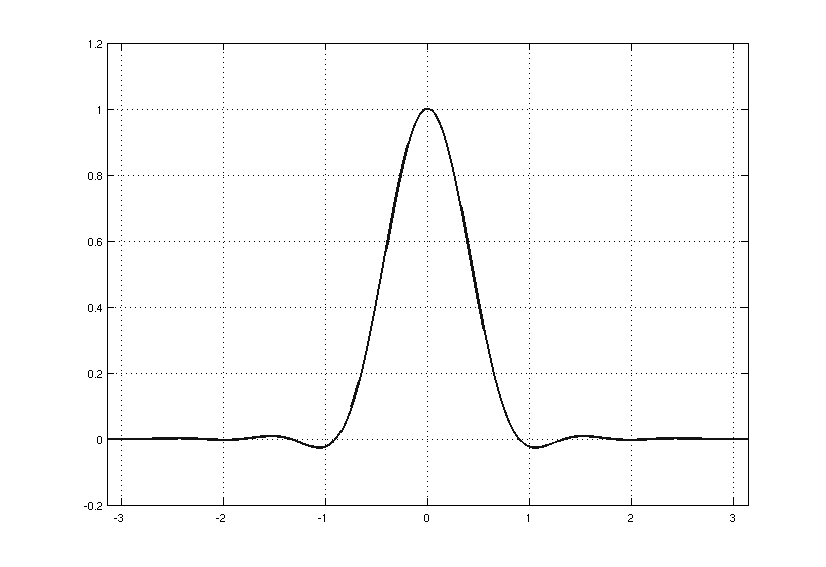}\\
  \end{minipage}
\caption{The polynomial filters $h^{(3)}_6$ and $h^{(4)}_6$ of degree $6$.}
\label{Figure-optimalHann}
\end{figure}

\end{document}